\newtheorem{teo}{Theorem}[section]
\newtheorem{pro}[teo]{Proposition}
\newtheorem{lem}[teo]{Lemma}
\newtheorem{cor}[teo]{Corollary}
\newtheorem{fed}[teo]{Definition}
\theoremstyle{definition}
\newtheorem{rem}[teo]{Remark}
\newtheorem{problem}{Problem}
\def\h{{\cal H}}
\def\nf{ {\|\cdot\|_{\phi}}}
\def\h{{\mathcal H}}
\def\bh{{\mathcal B}({\mathcal H})}
\def\kh{ {\mathcal K}(\h) }
\def\u{ {\mathcal U}({\mathcal H}) }
\def\ufi{ {\mathcal U}_{\phi} }
\def\nf{ {\|\cdot\|_{\phi}}}
\def\bdem{\begin{proof}}
\def\edem{\end{proof}}
\def\eps{\varepsilon}
\def\la{\lambda}
\def\La{\Lambda}
\def\N{\mathbb{N}}   
\def\R{\mathbb{R}}   
\def\C{\mathbb{C}}   
\def\cG{\mathcal{G}}
\def\G{\mathcal{G}}
\def\ele{\mathcal{L}}
\def\ese{\mathcal{S}}
\def\ete{\mathcal{Y}}
\def\eme{\mathcal{X}}
\def\eze{\mathcal{Z}}
 \DeclareMathOperator{\tr}{tr}
\newcommand{\peso}[1]{ \quad \text{ \rm  #1 } \quad }
\newcommand{\op}{B(\mathcal{H})}
\newcommand{\mat}{\mathcal{M}_n(\mathbb{C})}
\newcommand{\matsa}{\mathcal{H}(n)}
\newcommand{\matu}{\mathcal{U}(n)}
\newcommand{\matinv}{\mathcal{G}\textit{l}\,(n)}
\newcommand{\avi}[2]{\la_{#1}\left( #2\right)}
\newcommand{\svi}[2]{s_{#1}\left( #2\right)}
\newcommand{\sub}[2]{{#1}_{\mbox{\tiny{${#2}$}}}}
\def\trin{\hskip 3pt \hbox{\rm $\|$\hskip -6.9pt $\mid$ \hskip 3pt}}
\newcommand{\nnui}[1]{\trin #1 \trin}
\begin{document}


\title{Optimal paths for symmetric actions in the unitary group\footnote{2010 MSC. Primary  15A18, 51F25;  Secondary 47L20, 53C22.}}
%
\date{}
\author{Jorge Antezana, Gabriel Larotonda and Alejandro Varela}

\maketitle

\begin{abstract}
Given a positive and unitarily invariant Lagrangian $\mathcal{L}$ defined in the algebra of Hermitian matrices, and a fixed interval $[a,b]\subset\mathbb R$, we study the action defined in the Lie group of $n\times n$ unitary matrices $\mathcal{U}(n)$ by
$$
\ese(\alpha)=\int_a^b \mathcal{L}(\dot\alpha(t))\,dt\,,
$$
where $\alpha:[a,b]\to\mathcal{U}(n)$ is a rectifiable curve. We prove that the one-parameter subgroups of $\mathcal{U}(n)$ are the optimal paths, provided the spectrum of the exponent is bounded by $\pi$.  Moreover, if $\mathcal{L}$ is strictly convex, we prove that one-parameter subgroups are the unique optimal curves joining given endpoints. Finally, we also study the connection of these results with unitarily invariant metrics in $\mathcal{U}(n)$ as well as angular metrics in the Grassmann manifold. \footnote{Keywords and phrases: geodesic segment, Lagrangian, optimal path, unitarily invariant norm, unitary group, Grassmann manifold, angular metric.}
\end{abstract}


\section{Introduction}

The group of $n\times n$ complex unitary matrices $\matu$ carries, as any Lie group, a canonical connection without torsion defined on left-invariant vector fields $X,Y$ as $\nabla_XY=\frac12 [X,Y]$, whose geodesics are the one-parameter groups  $t\mapsto Ue^{tZ}$ (here $U$ is a unitary matrix and $Z$ an anti-Hermitian matrix). We can introduce a Riemannian metric on the unitary group in a standard fashion
$$
\langle X,Y\rangle_g=Tr(U^*X(U^*Y)^*)=Tr(XY^*),
$$
for $U^*X,U^*Y$ in the Lie algebra of the group, that is, for $U^*X,U^*Y$ anti-Hermitian matrices. It is well-known that the connection just introduced is in fact the Levi-Civita connection of the metric $g$ induced by the trace, and that geodesics are short provided the spectrum of $Z$ is bounded by $\pi$ (see for instance \cite{andcan}).

Now consider the bi-invariant Finsler metric given by the spectral norm,
$$
\sub{\|X\|}{U}=\|U^*X\|=\|X\|
$$
for any $X$ tangent to a unitary matrix $U$. Remarkably, if one keeps the connection but changes the metric, the geodesics of the connection are still short for the induced rectifiable distance (which, as in the Riemannian setting, is computed as the infimum of the length of piecewise smooth curves joining given endpoints, and $L(\alpha)=\int_0^1 \|\dot{\alpha}\|dt$).  The same result was also proved in \cite{upe}, using techniques of variational calculus, if the Finsler metrics are given by the $p$-Schatten norms for $p\geq 2$. This raises a natural question: what do these norms have in common that could imply this phenomenon? A possible answer could be that all these norms are unitarily invariant, thus they induce bi-invariant metrics on the unitary group.  One of the main obstacles to deal with general unitarily invariant norms, is that variational arguments become untractable if the norm is not smooth enough.

In this article we prove that this is the right answer, and introduce a new approach that simplifies considerably the technicalities. It is based in a beautiful and deep result due to Thompson on the product of exponential matrices (Theorem \ref{porron} below).

Our approach also works for more general optimization problems described as follows: fix a bounded interval $[a,b]\subset\mathbb R$, and let $\ese$ be the action defined on piecewise $C^1$ curves  $\alpha:[a,b]\to\matu$ by
$$
\ese(\alpha)=\int_a^b \mathcal{L}(\dot\alpha(t))\,dt,
$$
where $\ele$ is a Lagrangian defined in the algebra of  $n\times n$ matrices, with the following unitary invariance property: for every $n\times n$ matrix $A$,  and every pair of $n\times n$ unitary matrices $U$ and $V$
\begin{equation}\label{L nui}
\ele(UAV)=\ele(A).
\end{equation}
As usual, it is asked that the Lagrangian is a convex and positive map, and without loss of generality we will assume that $\mathcal L(0)=0$. A Lagrangian that satisfies these properties will be called \textit{symmetric Lagrangian}. Two classical examples of symmetric Lagrangians are:
\begin{itemize}
\item An unitarily invariant norm $\|\cdot\|_\phi$;
\item The kinetic energy $E(A)=\|A\|_F^2$, where $\|\cdot\|_F$ denotes the Frobenius norm.
\end{itemize}
In the first case, we recover the geometric context mentioned above, because  the action $\ese$ defines the length of $\alpha$ associated to the Finsler structure that considers the norm $\|\cdot\|_\phi$ in each tangent space. Note that in this case, $\ese$ does not depend on the parametrization of $\alpha$. So, there is no significative difference between  the  problem of finding a curve that minimizes $\ese$ among all piecewise $C^1$ curves or among all piecewise $C^1$ curves with a given interval of parameters.

However, in the second example, the action associated to the kinetic energy depends on the parametrization. Let $\alpha:[a,b]\to\matu$ be a smooth curve. A simple change of variable shows that, if we take the family of curves $\alpha_r:[ra,rb]\to\matu$ defined by $\alpha_r(t)=\alpha(t/r)$, then $r\mapsto \ese(\alpha_r)$ is a non-increasing function for $r\in(0,+\infty)$. The same phenomenon also holds for any other convex Lagrangian. This suggests that in order to find a minimum we should fix the length of the interval of parameters. This is also suggested by considering the example of the energy functional, where the parameter $t$ should be interpreted as the time parameter.

As translations of that interval do not change the value of $\ese(\alpha)$, without lost of generality we can consider intervals of the form $[0,b]$. So, the optimization problem that we will study is the following:

\begin{problem}\label{problema}
Given $U,V\in\matu$ and $b>0$, find the piecewise $C^1$ curves $\gamma:[0,b]\to\matu$ such that $\gamma(0)=U$, $\gamma(b)=V$ and $\gamma$ minimizes the action given by
\begin{equation}\label{accion}
\ese(\alpha)=\int_0^b \mathcal{L}(\dot\alpha(t))\,dt
\end{equation}
where $\ele$ is a given symmetric Lagrangian.
\end{problem}

The second question that arises is whether the minimal paths, when they exist, are unique or not, or if they are unique modulus a reparametrization of the path. Thus we will study the following:

\begin{problem}\label{problema2}
Given $U,V\in\matu$, $b>0$, and a minimizing function $\gamma:[0,b]\to\matu$ with $\gamma(0)=U$, $\gamma(b)=V$, is this function the unique minimizer of the Lagrangian for the given endpoints? Is it true that any other minimizing curve with this given endpoints is just a reparametrization of $\gamma$?
\end{problem}

\section{Preliminaries}\label{sn}

Throughout this paper $\mat$ denotes the algebra of complex $n\times n$ matrices, $\matinv$ the group of all invertible elements of $\mat$,  $\matu$ the group of unitary $n\times n$ matrices, and $\matsa$ the real subalgebra of Hermitian matrices.  If $T\in \mat$, then $\|T\|$ stands for the usual spectral norm,  $|\cdot|$  indicates the modulus of $T$, i.e. $|T|=\sqrt{T^*T}$, and $\tr(T)$ denotes the trace of $T$.
Given $A\in\matsa$, $\avi{1}{A}\geq \ldots\geq \avi{n}{A}$ denotes the eigenvalues of $A$ arranged in non-increasing way, and given an arbitrary matrix $T\in\mat$, $\svi{1}{T}\geq \ldots\geq \svi{n}{T}$ denotes the singular values of $T$, i.e. the eigenvalues of $|T|$. We will use $\lambda(A)$ (resp. $s(T)$) to denote the vector in $\mathbb R^n$ consisting of the eigenvalues of $A$ (resp. the singular values of $T$). Finally, given $A,B\in\matsa$, by means of $A\leq B$ we denote that $A$ is less that or equal to $B$ with respect to the L\"owner order.

\subsection{Product of exponentials}

We begin this subsection with the following remarkable result:

\begin{teo}[Thompson \cite{thompson}]\label{porron}
Given $X,Y\in \matsa$, there exist unitary matrices $U$ and $V$ such that
$$
e^{i X}e^{i Y}=e^{i (UXU^*+VYV^*)}\,.
$$
\end{teo}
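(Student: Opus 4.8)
The plan is to reduce the matrix identity to a purely spectral statement and then realize the required spectrum by a deformation argument. Since both sides are unitary, I would set $W:=e^{iX}e^{iY}\in\mathcal{U}(n)$ and look for a Hermitian logarithm $Z$ (that is, $e^{iZ}=W$) of the special form $Z=UXU^*+VYV^*$. The crucial simplification is that $\{UXU^*:U\in\mathcal{U}(n)\}$ and $\{VYV^*:V\in\mathcal{U}(n)\}$ are exactly the sets of Hermitian matrices with eigenvalues $\lambda(X)$ and $\lambda(Y)$, and these orbits are invariant under a further simultaneous conjugation. Hence it suffices to find one Hermitian $Z$ with $e^{iZ}=W$ whose eigenvalue vector $\lambda(Z)$ equals $\lambda(A+B)$ for some Hermitian $A,B$ with $\lambda(A)=\lambda(X)$, $\lambda(B)=\lambda(Y)$: indeed, if $\lambda(Z)=\lambda(A+B)$ then $Z=S(A+B)S^*=(SAS^*)+(SBS^*)$ for some unitary $S$, and $SAS^*$, $SBS^*$ again lie in the orbits of $X$ and $Y$. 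Thus the eigenvectors take care of themselves, and the whole theorem becomes the assertion that some Hermitian logarithm of $W$ has spectrum in the additive region $P(\lambda(X),\lambda(Y)):=\{\lambda(A+B)\}$.

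Next I would recall that $P(\lambda(X),\lambda(Y))$ is a convex polytope lying in the hyperplane $\sum_j\nu_j=\tr(X)+\tr(Y)$ and cut out by the eigenvalue inequalities for sums of Hermitian matrices (Weyl, Ky Fan, Lidskii, Thompson--Freede). The freedom on the logarithm side is also spectral: if the eigenvalues of $W$ are $e^{i\mu_1},\dots,e^{i\mu_n}$, then the admissible spectra of Hermitian logarithms are the vectors $(\mu_j+2\pi k_j)_j$ with $k_j\in\mathbb{Z}$ arbitrary. Since $\det W=e^{i\tr(X+Y)}$ forces $\sum_j\mu_j\equiv\tr(X)+\tr(Y)\pmod{2\pi}$, the trace constraint defining the hyperplane can always be met by an appropriate choice of $\sum_j k_j$. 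The content of the theorem is therefore that the integer shifts $k_j$ can be chosen so that the resulting vector also satisfies all the remaining Horn-type inequalities.

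To produce such a choice I would argue by continuous deformation: scale the data as $X_t=tX$, $Y_t=tY$ and set $W_t=e^{itX}e^{itY}$, a smooth path in $\mathcal{U}(n)$ with $W_0=I$. Choosing the continuous Hermitian logarithm $Z_t$ with $Z_0=0$, the Baker--Campbell--Hausdorff expansion gives $Z_t=t(X+Y)-\tfrac{i}{2}t^2[X,Y]+O(t^3)$ (which is Hermitian since $[X,Y]$ is anti-Hermitian), so for small $t$ the spectrum of $Z_t$ stays near $t\,\lambda(X+Y)\in t\,P(\lambda(X),\lambda(Y))$ and the principal branch gives a valid realization (the degenerate cases where $\lambda(X+Y)$ sits on the boundary of $P$ being treated separately). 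I would then track $\lambda(Z_t)$ as $t$ runs up to $1$, using the eigenvalue inequalities for Hermitian sums to bound its excursions and, whenever the spectrum approaches a facet produced by a $2\pi$-ambiguity, switching the relevant branch $k_j$ to re-enter the region; the target is the single value $t=1$.

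The main obstacle is exactly this last step: controlling the spectrum of a continuously chosen logarithm of the product and guaranteeing that some branch assignment keeps it inside the realizable polytope all the way to $t=1$. This is the genuinely hard, non-formal part, and it is where Thompson's sharp eigenvalue inequalities for products and sums of Hermitian matrices --- most likely combined with an induction on the dimension $n$ that peels off extremal eigenvalues --- must enter; note that even granting a complete description of $P(\lambda(X),\lambda(Y))$, the difficulty of steering a logarithmic branch into it would remain. The reduction in the first paragraph is what makes the problem purely spectral and hence amenable to these tools.
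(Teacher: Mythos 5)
The first thing to note is that the paper itself offers no proof of this statement: Theorem \ref{porron} is imported verbatim from Thompson \cite{thompson} precisely because its proof is long and genuinely difficult, so the only question is whether your sketch stands on its own as a proof. It does not. Your opening reduction is correct and worth keeping: if $Z$ is Hermitian with $e^{iZ}=W:=e^{iX}e^{iY}$ and $\lambda(Z)=\lambda(A+B)$ for some Hermitian $A,B$ with $\lambda(A)=\lambda(X)$, $\lambda(B)=\lambda(Y)$, then $Z=S(A+B)S^*=SAS^*+SBS^*$ for a unitary $S$, and $SAS^*=UXU^*$, $SBS^*=VYV^*$ for suitable unitaries, which is exactly the assertion. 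Likewise your description of the admissible spectra of Hermitian logarithms ($\mu_j+2\pi k_j$, with the trace constraint soluble because $\det W=e^{i\tr(X+Y)}$) is fine. But this reformulation concentrates, rather than resolves, the difficulty, and your third paragraph --- the deformation $W_t=e^{itX}e^{itY}$ with branch switching --- is a plan, not an argument, as you yourself concede in the final paragraph.

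Concretely, two things break. First, the tool you invoke to ``bound the excursions'' of $\lambda(Z_t)$, namely the Weyl/Ky Fan/Lidskii/Thompson--Freede inequalities, controls eigenvalues of \emph{sums} of Hermitian matrices; it says nothing a priori about eigenvalues of a Hermitian logarithm of the \emph{product} $e^{itX}e^{itY}$. Inequalities of that latter kind are exactly the content of the theorem being proved, so the tracking step is circular. Second, the branch bookkeeping is not well defined along the path: eigenvalues of $W_t$ can collide and can pass through $-1$, where any continuous logarithm selection degenerates and several assignments of the integers $k_j$ exchange; at such points ``switching the relevant branch to re-enter the region'' is not an operation you have shown to exist, since nothing forces the re-entered spectrum to satisfy the full family of inequalities cutting out $P(\lambda(X),\lambda(Y))$, and the target polytope $t\,P(\lambda(X),\lambda(Y))$ itself moves with $t$. (A further side issue: the identification of $P(\lambda(X),\lambda(Y))$ with the region cut out by such inequalities is Horn's conjecture, settled only well after Thompson's 1986 paper; one can work with $P$ abstractly as a compact convex set, but then the facet structure your switching argument leans on is no longer available.) In short, your proposal reduces the theorem to its hard core and stops there; closing that core is Thompson's actual contribution, and the paper, reasonably, cites it rather than reproving it.
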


We will use the following corollary of Thompson's theorem:

\begin{cor}\label{corti}
Let $X,Y,Z\in \matsa$ be such that $\|Z\|\leq \pi$ and $e^{iX}e^{iY}=e^{iZ}$. Then, there are unitary matrices $U$ and $V$ such that
$|Z|\leq|UXU^*+VYV^*|$.
\end{cor}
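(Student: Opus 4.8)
The plan is to combine Thompson's theorem (Theorem \ref{porron}) with a spectral comparison of the two Hermitian logarithms it produces. First I would invoke Theorem \ref{porron} to obtain unitaries $U,V$ with $e^{iX}e^{iY}=e^{i(UXU^*+VYV^*)}$, and set $W:=UXU^*+VYV^*\in\matsa$. Since by hypothesis $e^{iX}e^{iY}=e^{iZ}$, the whole problem collapses to the following self-contained claim about a pair of Hermitian matrices: \emph{if $Z,W\in\matsa$ satisfy $e^{iZ}=e^{iW}$ and $\|Z\|\le\pi$, then $|Z|\le|W|$ in the L\"owner order}. Proving this claim for $Z$ and $W$ is the heart of the matter, and it immediately gives the stated inequality $|Z|\le|W|=|UXU^*+VYV^*|$.

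To prove the claim I would exploit that each logarithm commutes with its common exponential. Put $M:=e^{iZ}=e^{iW}$, a unitary matrix, and decompose $\C^n=\bigoplus_\mu \cH_\mu$ into its eigenspaces, where $\mu$ ranges over the eigenvalues of $M$. Because $Z$ commutes with $e^{iZ}=M$ and $W$ commutes with $e^{iW}=M$, both $Z$ and $W$ --- and hence $Z^2,W^2$ and their positive square roots $|Z|,|W|$ --- leave every $\cH_\mu$ invariant. Thus all four operators are block diagonal for this single orthogonal decomposition, and the desired inequality $|Z|\le|W|$ is equivalent to the blockwise inequalities $|Z|\big|_{\cH_\mu}\le|W|\big|_{\cH_\mu}$ for each $\mu$; here I use that for positive operators reduced by a common decomposition the L\"owner comparison may be checked block by block.

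It remains to analyze a single block, and this is where $\|Z\|\le\pi$ enters. Writing $\mu=e^{i\theta}$ with principal argument $\theta\in(-\pi,\pi]$, the restriction of $Z$ to $\cH_\mu$ has spectrum contained in $(\theta+2\pi\Z)\cap[-\pi,\pi]$, while the restriction of $W$ has spectrum in $\theta+2\pi\Z$ with no further constraint. For $\theta\in(-\pi,\pi)$ the bound on $Z$ forces $Z\big|_{\cH_\mu}=\theta\,I$, so $|Z|\big|_{\cH_\mu}=|\theta|\,I$, whereas every eigenvalue of $W\big|_{\cH_\mu}$ has the form $\theta+2\pi k$ and therefore satisfies $|\theta+2\pi k|\ge|\theta|$; hence $|W|\big|_{\cH_\mu}\ge|\theta|\,I=|Z|\big|_{\cH_\mu}$. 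The one delicate case is $\mu=-1$ (that is $\theta=\pi$), which sits exactly at the boundary permitted by $\|Z\|\le\pi$: there $Z\big|_{\cH_\mu}$ may carry both eigenvalues $\pm\pi$, but in either case $|Z|\big|_{\cH_\mu}=\pi\,I$, while the eigenvalues $\pi+2\pi k$ of $W\big|_{\cH_\mu}$ all have modulus $\ge\pi$, so the inequality persists. Assembling the blocks yields $|Z|\le|W|$. I expect the eigenvalue $\mu=-1$ to be the main obstacle, since it is precisely the place where the hypothesis $\|Z\|\le\pi$ is used with no room to spare and where the Hermitian logarithm fails to be unique.
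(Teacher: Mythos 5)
Your proposal is correct and follows essentially the same route as the paper: reduce via Thompson's theorem to the claim that $e^{iZ}=e^{iW}$ and $\|Z\|\le\pi$ force $|Z|\le|W|$, then prove that claim by a spectral, blockwise comparison in which $Z$ is scalar on each eigenspace of the common exponential and the eigenvalue $-1$ is the special boundary case. The paper organizes this by the spectral decomposition of $W$ (its set $\Lambda$ is exactly your $\mu=-1$ block), so the two arguments differ only in bookkeeping.
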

\begin{proof}
By Thompson's Theorem it is enough to prove that,  if $X,Y\in \matsa$,  $e^{iX}=e^{iY}$, and  $\|X\|\leq\pi$, then $|X|\le |Y|$. Let $Y=\sum_{n\in\N} \eta_n\ e_n\otimes e_n$ be a spectral decomposition of $Y$. If $\La=\{n: e^{i\eta_n}=-1\}$, then
$$
|X|=\pi P+\sum_{n\notin\La} |\mu_n|\, e_n\otimes e_n\,,
$$
where $P$ is the spectral projection of $X$ onto the subspace generated by the eigenvectors associated to $\pm\pi$, and  the eigenvalues $\mu_n\in(-\pi,\pi)$ satisfy that $e^{i\mu_n}=e^{i\eta_n}$ for every $n\notin\La$. Clearly $PY=YP$ and $P|X|P\leq P|Y|P$. On the other hand, since $|\mu_n|\leq |\eta_n|$ for every $n\notin\La$, we also obtain that $(1-P)|X|(1-P)\leq (1-P)|Y|(1-P)$.
\end{proof}

Another result due to Thompson is the following triangle inequality for the modulus of matrices:

\begin{teo}[Thompson \cite{[T1],[T2]}]\label{modulos}
Given $A,B\in\mat$, there exist unitaries $V$ and $W$ such that
$$
|X+Y|\leq V|X|V^*+W|Y|W^*.
$$
\end{teo}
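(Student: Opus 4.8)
The plan is to reduce the stated inequality, by means of the polar decomposition of $X+Y$, to the classical fact that the eigenvalues of the Hermitian part of a matrix are dominated by its singular values. First I would write the polar decomposition $X+Y=U\,|X+Y|$ with $U\in\matu$ (in finite dimensions the partial isometry coming from the singular value decomposition can always be completed to a unitary). Multiplying on the left by $U^*$ gives $|X+Y|=U^*X+U^*Y$, and since the left-hand side is Hermitian, taking Hermitian parts yields
$$
|X+Y|=\Preal(U^*X)+\Preal(U^*Y),
$$
where $\Preal(M)=\tfrac12(M+M^*)$. Moreover $|U^*X|=(X^*UU^*X)^{1/2}=|X|$ and likewise $|U^*Y|=|Y|$, so the singular values are left unchanged: $s_k(U^*X)=s_k(X)$ and $s_k(U^*Y)=s_k(Y)$ for all $k$.

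The core step is the Fan--Hoffman inequality: for any $M\in\mat$ one has $\lambda_k(\Preal(M))\le s_k(M)$ for every $k$. I would deduce this from the variational characterizations. If $W$ denotes the span of the top $k$ eigenvectors of $\Preal(M)$, then for every unit vector $x\in W$,
$$
\|Mx\|\ \ge\ \Preal\langle Mx,x\rangle=\langle\Preal(M)x,x\rangle\ \ge\ \lambda_k(\Preal(M)),
$$
so plugging the $k$-dimensional subspace $W$ into the max--min formula $s_k(M)=\max_{\dim V=k}\ \min_{x\in V,\,\|x\|=1}\|Mx\|$ gives $s_k(M)\ge\lambda_k(\Preal(M))$. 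Applying this to $M=U^*X$ and to $M=U^*Y$ produces
$$
\lambda_k\big(\Preal(U^*X)\big)\le s_k(X)\quad\text{and}\quad\lambda_k\big(\Preal(U^*Y)\big)\le s_k(Y)\qquad\text{for all }k.
$$

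Finally I would invoke the elementary order lemma: if $P,Q\in\matsa$ satisfy $\lambda_k(P)\le\lambda_k(Q)$ for all $k$, then $P\le VQV^*$ for some $V\in\matu$ (diagonalize both and let $V$ carry the ordered eigenbasis of $Q$ onto that of $P$, so that $VQV^*-P=\sum_k(\lambda_k(Q)-\lambda_k(P))\,p_kp_k^*\ge0$). Applied to the pairs $\big(\Preal(U^*X),\,|X|\big)$ and $\big(\Preal(U^*Y),\,|Y|\big)$, this yields unitaries $V,W$ with $\Preal(U^*X)\le V|X|V^*$ and $\Preal(U^*Y)\le W|Y|W^*$; adding these and using the displayed identity for $|X+Y|$ gives exactly $|X+Y|\le V|X|V^*+W|Y|W^*$. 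The main obstacle is the Fan--Hoffman step, which carries the whole argument; the polar decomposition and the spectral/order lemma are routine bookkeeping.
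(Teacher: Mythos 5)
The paper itself gives no proof of Theorem \ref{modulos}: it is quoted as a known result of Thompson with the citations \cite{[T1],[T2]}, so there is no internal argument to compare yours against. Your proof is correct, and it is in fact essentially the classical argument for this inequality (it is close to Thompson's own). Every step checks out: the polar decomposition $X+Y=U|X+Y|$ with $U$ unitary always exists for square matrices; multiplying by $U^*$ and taking Hermitian parts gives $|X+Y|=\Preal(U^*X)+\Preal(U^*Y)$ because the left-hand side is Hermitian; your derivation of the Fan--Hoffman inequality $\lambda_k(\Preal(M))\le s_k(M)$ is sound, since for a unit vector $x$ in the span of the top $k$ eigenvectors of $\Preal(M)$ one has $\|Mx\|\ge\Preal\langle Mx,x\rangle=\langle\Preal(M)x,x\rangle\ge\lambda_k(\Preal(M))$, and this subspace is admissible in the max--min characterization $s_k(M)=\max_{\dim V=k}\min_{x\in V,\|x\|=1}\|Mx\|$; the invariance $s_k(U^*X)=s_k(X)$ is immediate from $(U^*X)^*(U^*X)=X^*X$; and the order lemma (entrywise eigenvalue domination implies $P\le VQV^*$ after conjugating by the unitary matching the two ordered eigenbases) is correct as stated, applied here with $Q=|X|$ whose eigenvalues are exactly $s_k(X)$. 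Adding the two conjugation bounds reproduces the claim. The only remark worth making is attributional rather than mathematical: what you have written is a self-contained proof of a result the authors deliberately outsource to the literature, and its three ingredients (polar decomposition, Fan--Hoffman, and the conjugation lemma) are precisely the standard route.
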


Combining this result with Corollary \ref{corti} we get:
\begin{pro}\label{muchas}
Let $m\geq 2$, and consider $X,X_1,\ldots,X_m\in\matsa$ such that $\|X\|\leq \pi$ and
$$
e^{iX}=e^{iX_1}\cdots e^{iX_m}\,.
$$
Then, there exist unitary matrices $U_1,\ldots, U_m$ such that
$\displaystyle |X|\leq \sum_{k=1}^{m} U_k|X_k|U_k^*$.
\end{pro}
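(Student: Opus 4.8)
The plan is to argue by induction on $m$, using Corollary \ref{corti} and Thompson's triangle inequality (Theorem \ref{modulos}) as the two engines, together with the elementary identity $|WAW^*|=W|A|W^*$, valid for any matrix $A$ and any unitary $W$. This identity follows at once from $|WAW^*|^2=(WAW^*)^*(WAW^*)=WA^*AW^*=W|A|^2W^*$ and the fact that unitary conjugation commutes with the continuous functional calculus; I will also use repeatedly that conjugation by a unitary preserves the L\"owner order.

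For the base case $m=2$ I would start from $e^{iX}=e^{iX_1}e^{iX_2}$ with $\|X\|\leq\pi$. Corollary \ref{corti} produces unitaries $U,V$ with $|X|\leq|UX_1U^*+VX_2V^*|$. Applying Theorem \ref{modulos} to the two summands yields unitaries $V',W'$ with $|UX_1U^*+VX_2V^*|\leq V'|UX_1U^*|V'^*+W'|VX_2V^*|W'^*$. Using the identity above to rewrite $|UX_1U^*|=U|X_1|U^*$ and $|VX_2V^*|=V|X_2|V^*$, and setting $U_1=V'U$, $U_2=W'V$, gives $|X|\leq U_1|X_1|U_1^*+U_2|X_2|U_2^*$, as desired.

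For the inductive step, assume the statement for $m-1$ factors. Given $e^{iX}=e^{iX_1}\cdots e^{iX_m}$ with $\|X\|\leq\pi$, the key move is to write the tail product $e^{iX_2}\cdots e^{iX_m}$, which is unitary, as $e^{iW}$ for some $W\in\matsa$ whose spectrum lies in $(-\pi,\pi]$, so that $\|W\|\leq\pi$; such a Hermitian logarithm always exists by the spectral theorem applied to a unitary matrix (equivalently, the principal branch of the logarithm). This reduces the problem to the two-factor identity $e^{iX}=e^{iX_1}e^{iW}$, and the base case supplies unitaries $U_1,V$ with $|X|\leq U_1|X_1|U_1^*+V|W|V^*$. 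Since $e^{iW}=e^{iX_2}\cdots e^{iX_m}$ and $\|W\|\leq\pi$, the inductive hypothesis furnishes unitaries $U_2',\ldots,U_m'$ with $|W|\leq\sum_{k=2}^m U_k'|X_k|U_k'^*$. Conjugating this inequality by $V$ and inserting it into the previous bound, with $U_k=VU_k'$ for $k\geq 2$, yields $|X|\leq\sum_{k=1}^m U_k|X_k|U_k^*$.

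The only delicate point, and the place where the hypotheses are genuinely spent, is the choice of the logarithm $W$ with $\|W\|\leq\pi$: this spectral bound is exactly what is needed to invoke both Corollary \ref{corti} in the reduced two-factor identity and the inductive hypothesis for the tail, and it is available precisely because one may always select a Hermitian logarithm of a unitary with spectrum in $(-\pi,\pi]$. Everything else is bookkeeping of unitary conjugations, rendered harmless by the identity $|WAW^*|=W|A|W^*$ and the order-preservation of conjugation.
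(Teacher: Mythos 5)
Your proof is correct and takes essentially the same route as the paper's: induction on $m$, with the base case obtained by combining Corollary \ref{corti} with Thompson's triangle inequality (Theorem \ref{modulos}), and the inductive step by choosing a Hermitian logarithm of the tail product with norm at most $\pi$ and splicing the two-factor case into the inductive hypothesis via unitary conjugation. The only difference is expository: you make explicit the identity $|WAW^*|=W|A|W^*$ and the existence of the logarithm with spectrum in $(-\pi,\pi]$, both of which the paper leaves implicit.
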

\begin{proof}
For $m=2$ it is a direct consequence of Corollary \ref{corti} and Theorem \ref{modulos}. Suppose that the result is proved for $m=k$. Then, given
 $X,X_1,\ldots,X_{k+1}\in\matsa$ such that $\|X\|\leq \pi$, let $Y\in\matsa$ be such that $\|Y\|\leq \pi$ and
 $$
 e^{iY}=e^{iX_2}\cdots e^{iX_{k+1}}.
 $$
 By the inductive hypothesis, there exist unitary matrices $V_2,\ldots,V_{k+1}$ such that
 $$
 |Y|\leq \sum_{j=2}^{k+1} V_j|X_j|V_j^*\,.
 $$
 On the other hand, since $e^{iX}=e^{iX_1}e^{iY}$, by the case $n=2$ already proved, there are unitary matrices $U_1$ and $U$ such that
 $|X|\leq U_1|X_1|U_1^*+U|Y|U^*$. If we define $U_j=UV_j$ for $j\geq 2$, then we get the desired result.
\end{proof}

\subsection{The Lagrangians}

Let us list in the following proposition several properties of the symmetric Lagrangian that will be used in the sequel:

\begin{pro}\label{propiedades de L}
Let $\ele:\mat\to[0,\infty)$ be a symmetric Lagrangian, i.e. convex, $\ele(0)=0$, and unitarily invariant in the sense of equation \eqref{L nui}. Then
\begin{enumerate}
\item[(P1)]  $\ele$ is continuous,
\item[(P2)]  $\ele(tA)\leq t\ele(A)$ for every $t\in[0,1]$,
\item[(P3)]  $\ele(A)\leq \ele(B)$ provided $0\leq A\leq B$,
\item[(P4)]  There exists $\phi:\R^n_+\to[0,+\infty)$ such that $\ele(A)=\phi(s(A))$. This $\phi$ is invariant under rearrangement, positive, convex, with $\phi(0)=0$ and $\phi(x)\le\phi(y)$ if $x,y\in \mathbb R_n^+$ and $x_i\le y_i$ for $i=1\dots n$.
\end{enumerate}
\end{pro}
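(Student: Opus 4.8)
The plan is to dispatch (P1) and (P2) by elementary convexity, to build the function $\phi$ of (P4) out of the singular value decomposition, and then to observe that the only genuine content is the monotonicity asserted in (P3) and at the end of (P4), which I would extract from a single one-dimensional reflection-plus-convexity argument combined with Weyl's eigenvalue monotonicity.

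For (P1) I would invoke the standard fact that a convex function which is finite on all of a finite-dimensional real vector space is automatically continuous (indeed locally Lipschitz on the interior of its domain); since $\ele$ is finite everywhere on $\mat\cong\R^{2n^2}$, continuity is immediate. For (P2), using $\ele(0)=0$ and $tA=(1-t)\cdot 0+t\cdot A$ for $t\in[0,1]$, convexity gives $\ele(tA)\le(1-t)\ele(0)+t\ele(A)=t\ele(A)$.

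For (P4) I would use the SVD $A=U\,\diag{s(A)}\,V^*$ together with the invariance \eqref{L nui} to conclude that $\ele(A)=\ele(\diag{s(A)})$ depends only on the singular values. I then set $\phi(x)=\ele(\diag{x})$, first for all $x\in\R^n$ and then restricted to $\R^n_+$, so that $\ele(A)=\phi(s(A))$. Positivity and $\phi(0)=0$ are inherited from $\ele$; convexity follows because $\phi=\ele\circ\diag$ is the composition of the convex map $\ele$ with the linear map $\diag{\cdot}$; and rearrangement invariance comes from $\diag{Px}=P\,\diag{x}\,P^*$ for a permutation matrix $P$, so \eqref{L nui} yields $\phi(Px)=\phi(x)$. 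The same argument with the diagonal unitary $\diag{\pm 1}$ shows that $x\mapsto\ele(\diag x)$ is invariant under sign changes of the coordinates, a fact I will exploit below.

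The heart of the matter is the coordinatewise monotonicity of $\phi$, which also yields (P3). I would prove it one coordinate at a time: if $y$ agrees with $x$ except that $0\le x_1\le y_1$, I write $x_1=\mu y_1+(1-\mu)(-y_1)$ with $\mu=\tfrac{x_1+y_1}{2y_1}\in[0,1]$, so that $x=\mu\,y+(1-\mu)\,y'$, where $y'$ is $y$ with its first coordinate negated. Convexity and the sign-change invariance $\phi(y')=\phi(y)$ then give $\phi(x)\le\mu\phi(y)+(1-\mu)\phi(y')=\phi(y)$, and iterating over the coordinates proves $\phi(x)\le\phi(y)$ whenever $0\le x_i\le y_i$ for all $i$. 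Finally, for (P3) I would note that $0\le A\le B$ forces $\svi{i}{A}=\lam{i}{A}\le\lam{i}{B}=\svi{i}{B}$ for every $i$ by Weyl's monotonicity principle, whence $\ele(A)=\phi(s(A))\le\phi(s(B))=\ele(B)$. The main obstacle is precisely this monotonicity: it is not a formal consequence of convexity and permutation invariance alone, and it is the reflection trick together with sign-change invariance that makes it work; a secondary point requiring care is that the passage to the operator statement (P3) relies on the coincidence of singular values and eigenvalues for positive semidefinite matrices, so that Weyl's inequalities apply directly.
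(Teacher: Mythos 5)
Your proof is correct and follows essentially the same route as the paper: (P1) and (P2) by elementary convexity with $\ele(0)=0$, (P4) by using the SVD to define $\phi(x)=\ele(\diag{x})$, and (P3) as a consequence of the coordinatewise monotonicity in (P4). The only difference is one of detail: you make explicit the reflection/sign-change trick behind the monotonicity of $\phi$ and the Weyl monotonicity step $0\le A\le B\Rightarrow s_i(A)\le s_i(B)$, both of which the paper asserts without proof ("Convexity implies\dots", "(P3) is a direct consequence").
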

\begin{proof}
The first property is clear because every convex function in a finite dimensional vector space is continuous. Also (P2) is a consequence of the convexity and the fact that $\ele(0)=0$. As $\ele$ is unitarily invariant, the singular value decomposition implies that $\ele(A)$ only depends on the singular values of $A$. Hence, if $x\in\mathbb R_n^+$ and $\mbox{diag}(x)$ denotes the $n\times n$ diagonal matrix whose diagonal entries correspond to the coordinates of $x$, we can define $\phi(x)=\ele(\mbox{diag}(x))$; clearly $\phi(0)=0$, it is non-negative and convex. Convexity implies that if $x,y\in\R^n_+$ and $x_i\leq y_i$ for $i=1,\ldots,n$, then $\phi(x)\leq \phi(y)$. This proves (P4), and (P3) is a direct consequence of it.
\end{proof}

\begin{rem}
 Let $\phi:\R^n_+\to[0,+\infty)$ be a rearrangement invariant, positive and convex function, with $\phi(0)=0$. Then $\phi$ gives place to a symmetric Lagrangian $\ele_\phi$ via the equation $\ele_\phi(A)=\phi(s(A))$. Note that the natural extension of $\phi$ to $\mathbb R^n$ is strongly Schur convex, but not necessarily subadditive.
\end{rem}

\section{Optimality of one parameter subgroups}\label{gs}

A \textit{geodesic segment} is a curve $t\mapsto Ue^{itZ}$ for  $Z\in\matsa$ and  $U\in\matu$. In this section we prove that the geodesic segments (which are parametrized with constant velocity) are optimal for Problem \ref{problema}.  Moreover, if $\ele$ is strictly convex, then we will prove that these geodesic segments are the unique optimal paths.

\subsection{Geodesic segments are short}

\begin{fed}\label{poligonal}
A \textit{polygonal path} is a broken geodesic, that is, a curve $P:[0,b]\to\matu$ such that there is a partition of the interval $[0,b]$ given by the points $0=t_0<$ $\ldots$ $<t_k=b$,  Herminitian matrices $X_1$,$\ldots$,$X_k$ with norm less than or equal to $\pi$, and $U\in\matu$ so that
\begin{equation}\label{eq poli}
P(t)=\begin{cases}
Ue^{i\frac{t}{t_1}X_1}&\mbox{if $t\in[0,t_1]$}\\
Ue^{iX_1}\cdots e^{iX_{j-1}}e^{i\frac{t-t_{j-1}}{t_j-t_{j-1}}X_j}&\mbox{if $t\in[t_{j-1},t_j]$ ($j>1$)}\\
\end{cases}\,.
\end{equation}
\end{fed}

Our first step toward the proof of the optimality of the geodesic segments with constant velocity is the following proposition, which proves that segments are better than polygonal paths. 

\begin{pro}\label{poli}
Let $U\in \matu$ and $V=Ue^{iZ}$, with $Z\in\matsa$ and $\|Z\|\le \pi$. Let $\gamma:[0,b]\to\matu$ be  the segment $\gamma(t)=Ue^{it\frac{Z}{b}}$, and $P:[0,b]\to \matu$  a polygonal path joining $U$ to $V$. Then $\ese(P)\ge \ese(\gamma)$.
\end{pro}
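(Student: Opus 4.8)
The plan is to compute both actions explicitly, convert the multiplicative endpoint constraint into an additive inequality between moduli via Thompson's theorem, and then close the gap with a single convexity estimate.

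First I would evaluate the integrand on each curve using the unitary invariance \eqref{L nui}, together with the observation that $\ele(iA)=\ele(A)$ for Hermitian $A$ (both matrices have the same singular values, since $|iA|=|A|$). For the segment, $\dot\gamma(t)=U\,i\tfrac{Z}{b}\,e^{itZ/b}$, so $\ele(\dot\gamma(t))=\ele(Z/b)$ is constant in $t$ and $\ese(\gamma)=b\,\ele(Z/b)$. For the polygonal path, writing $s_j=t_j-t_{j-1}$, on the $j$-th subinterval one has $\dot P(t)=Ue^{iX_1}\cdots e^{iX_{j-1}}\,i\tfrac{X_j}{s_j}\,e^{i\frac{t-t_{j-1}}{s_j}X_j}$, hence $\ele(\dot P(t))=\ele(X_j/s_j)$ and $\ese(P)=\sum_{j=1}^k s_j\,\ele(X_j/s_j)$. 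I would record that $\sum_{j=1}^k s_j=b$.

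Next I would exploit the constraint coming from the common endpoints. Since $P(b)=V=Ue^{iZ}$ while $P(b)=Ue^{iX_1}\cdots e^{iX_k}$, we obtain $e^{iZ}=e^{iX_1}\cdots e^{iX_k}$ with $\|Z\|\le\pi$. Proposition \ref{muchas} then furnishes unitaries $U_1,\ldots,U_k$ with
\[
|Z|\le \sum_{j=1}^k U_j|X_j|U_j^*\,.
\]
(For $k\ge 2$ this is exactly Proposition \ref{muchas}; the case $k=1$ follows directly from the inequality $|Z|\le|X_1|$ established inside the proof of Corollary \ref{corti}, taking $U_1=I$.) With this linearization in hand I would set $\theta_j=s_j/b$, so that $\theta_j\ge 0$ and $\sum_j\theta_j=1$, and chain monotonicity, convexity and unitary invariance of $\ele$. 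Using $\ele(Z/b)=\ele(|Z|/b)$ and dividing the modulus inequality by $b$, property (P3) gives
\[
\ele(Z/b)\le \ele\Big(\sum_{j=1}^k \theta_j\,\tfrac{U_j|X_j|U_j^*}{s_j}\Big)\le \sum_{j=1}^k \theta_j\,\ele\Big(\tfrac{U_j|X_j|U_j^*}{s_j}\Big)=\sum_{j=1}^k \tfrac{s_j}{b}\,\ele(X_j/s_j),
\]
where the middle inequality is convexity of $\ele$ and the last equality is unitary invariance. Multiplying through by $b$ yields $\ese(\gamma)=b\,\ele(Z/b)\le \sum_{j=1}^k s_j\,\ele(X_j/s_j)=\ese(P)$, as desired.

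The only genuine difficulty is the passage from the multiplicative endpoint condition $e^{iZ}=e^{iX_1}\cdots e^{iX_k}$ to the additive modulus bound; this is precisely the content of Thompson's theorem packaged in Proposition \ref{muchas}, and it is where the spectral restriction $\|Z\|\le\pi$ is essential. Once that bridge is crossed, the argument reduces to monotonicity (P3) and a Jensen-type inequality for the convex map $\ele$, with no variational or smoothness input. The two points requiring care are that the weights $s_j/b$ sum to one (guaranteed by the partition) and that the degenerate case $k=1$ be recorded separately, where the estimate becomes an equality.
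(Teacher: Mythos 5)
Your proof is correct and follows essentially the same route as the paper's: compute both actions explicitly, invoke Proposition \ref{muchas} to convert the endpoint constraint $e^{iZ}=e^{iX_1}\cdots e^{iX_k}$ into the modulus bound $|Z|\le\sum_j U_j|X_j|U_j^*$, and conclude by the chain of unitary invariance, convexity and monotonicity (P3) of $\ele$. Your explicit handling of the degenerate case $k=1$ (via the inequality established inside the proof of Corollary \ref{corti}) is a small refinement that the paper leaves implicit, since Proposition \ref{muchas} is stated only for $m\ge 2$.
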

\begin{proof}
Let $0=t_0<$ $\ldots$ $<t_k=b$, and $X_1$,$\ldots$,$X_k\in\matsa$ with norm less than or equal to $\pi$,  so that $P$ has the form showed in \eqref{eq poli} . Then
\begin{align}
\ese(P)&=\sum_{j=1}^{k}\int_{t_{j-1}}^{t_j}\ele\big(\dot{P}(t)\big)\,dt=\sum_{j=1}^{k}\int_{t_{j-1}}^{t_j}\ele\left(\frac{X_j}{t_j-t_{j-1}}\right)\,dt \nonumber\\
&=\sum_{j=1}^{k}(t_j-t_{j-1})\ele\left(\frac{X_j}{t_j-t_{j-1}}\right) \label{cc}
\end{align}
On the other hand, since $e^{iZ}=e^{iX_1}\cdots e^{iX_k}$ and $\|Z\|\leq \pi$, by Proposition \ref{muchas} there exist unitary matrices $U_1,\ldots U_n$ such that
\begin{align}
|Z|\leq \sum_{k=1}^{n} U_k|X_k|U_k^*.\label{no hace falta}
\end{align}
Then, joining \eqref{cc} and \eqref{no hace falta}, and using the properties of $\ele$ we obtain
\begin{align*}
\ese(P)&=b\sum_{j=1}^{k}\frac{(t_j-t_{j-1})}{b}\ele\left(\frac{X_j}{t_j-t_{j-1}}\right) \\&
\geq b\,\ele\left(\frac{1}{b}\sum_{j=1}^{k} U_j|X_j|U_j^*\right)\geq b\,\ele\Big(\frac{Z}{b}\Big)\\
&=\int_0^b\ele\Big(\frac{Z}{b}\Big)\,dt=\ese(\gamma).
\end{align*}

 \end{proof}

To prove that  geodesic segments are optimal paths among all the possible piecewise $C^1$ curves, we need the following standard approximation result by polygonal paths.

\begin{lem}
Let $\alpha:[0,b]\to\matu$ be piecewise smooth. Then for any $\epsilon>0$ there is a polygonal path $P_{\epsilon}:[0,b]\to\matu$ such that for any $t\in [0,b]$,
$$
\|P_{\epsilon}^*(t)\dot{P}_{\epsilon}(t)-\alpha^*(t)\dot{\alpha}(t)\|<\epsilon.
$$
\end{lem}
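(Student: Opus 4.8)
The plan is to approximate an arbitrary piecewise smooth curve $\alpha$ uniformly (in the sense of the logarithmic derivative $\alpha^*\dot\alpha$) by genuine broken geodesics. The key idea is that on a sufficiently fine partition of $[0,b]$, the restriction of $\alpha$ to each subinterval is close to the geodesic joining its endpoints, and that closeness can be made to propagate to the velocities. First I would exploit uniform continuity: since $\alpha$ is piecewise smooth on a compact interval, the map $t\mapsto\alpha^*(t)\dot\alpha(t)$ is (away from the finitely many breakpoints of $\alpha$) uniformly continuous, so given $\epsilon>0$ there is $\delta>0$ so that this quantity varies by less than some fraction of $\epsilon$ on any subinterval of length $<\delta$. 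I would then refine the partition so that it contains all the breakpoints of $\alpha$ and has mesh smaller than $\delta$.

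On each subinterval $[t_{j-1},t_j]$ I would define the candidate geodesic piece by setting $X_j\in\matsa$ with $\|X_j\|\le\pi$ to be the Hermitian logarithm determined by
$$
\alpha(t_{j-1})^*\alpha(t_j)=e^{iX_j},
$$
so that the broken-geodesic $P_\epsilon$ built from these $X_j$ as in \eqref{eq poli} (with the base point $U=\alpha(0)$) actually agrees with $\alpha$ at every partition node. The velocity of $P_\epsilon$ on $[t_{j-1},t_j]$ is then constant, equal to $P_\epsilon^*(t)\dot P_\epsilon(t)=\frac{i}{t_j-t_{j-1}}X_j$. The heart of the matter is to compare this constant velocity with the genuine velocities $\alpha^*(t)\dot\alpha(t)$ on the same subinterval. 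I would use the first-order behaviour of the matrix exponential/logarithm: from $e^{iX_j}=\alpha(t_{j-1})^*\alpha(t_j)$ and a Taylor expansion of $\alpha$ around $t_{j-1}$, one obtains $\frac{i}{t_j-t_{j-1}}X_j=\alpha^*(t_{j-1})\dot\alpha(t_{j-1})+O(t_j-t_{j-1})$, where the error constant is controlled by a uniform bound on $\dot\alpha$ and $\ddot\alpha$. Combining this with the uniform continuity estimate for $\alpha^*\dot\alpha$ on the subinterval yields the desired bound $\|P_\epsilon^*(t)\dot P_\epsilon(t)-\alpha^*(t)\dot\alpha(t)\|<\epsilon$ for all $t$.

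Two technical points require care. First, the Hermitian logarithm with spectrum in $[-\pi,\pi]$ is well defined and (for a fine enough partition, where $\alpha(t_{j-1})^*\alpha(t_j)$ is close to the identity) has spectrum strictly inside, so the constraint $\|X_j\|\le\pi$ in Definition \ref{poligonal} is automatically met; I would invoke the local diffeomorphism property of $\exp$ near the identity to make this precise and to get the smooth dependence needed for the Taylor estimate. Second, at the breakpoints of $\alpha$ the quantity $\alpha^*\dot\alpha$ may jump, but since the partition is chosen to include all breakpoints, each subinterval sits inside a single smooth piece and the one-sided estimates suffice. I expect the main obstacle to be the quantitative first-order comparison of the averaged geodesic velocity $\frac{i}{t_j-t_{j-1}}X_j$ with the pointwise velocity $\alpha^*\dot\alpha$: one must show the discretization error from replacing an arc of $\alpha$ by its chordal geodesic is genuinely $O(\text{mesh})$ uniformly in $j$, which is where a uniform $C^2$ bound on $\alpha$ (on each smooth piece) and the smoothness of the logarithm enter. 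Once this uniform linear error bound is established, the conclusion follows by choosing the mesh small enough.
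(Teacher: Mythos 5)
Your proposal is correct and follows essentially the same route as the paper: partition finely using uniform continuity of $\alpha^*\dot\alpha$ (with breakpoints as nodes), define each geodesic piece by the principal logarithm of $\alpha(t_{j-1})^*\alpha(t_j)$, and compare the resulting constant velocity $\frac{i}{t_j-t_{j-1}}X_j$ with $\alpha^*(t)\dot\alpha(t)$ via the first-order behaviour of $\exp/\log$ plus the oscillation bound. The paper phrases your Taylor-expansion step as the analyticity of $h\mapsto\frac{1}{h}\log(\alpha^*(t)\alpha(t+h))$ and its limit $\alpha^*(t)\dot\alpha(t)$ as $h\to0$, followed by a refinement of the partition, which is the same estimate in different clothing.
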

\begin{proof}
We may as well assume that $\alpha$ is smooth in $[0,b]$. Recall that $\alpha,\dot{\alpha}$ are continuous in the uniform norm. Let $\epsilon>0$, and choose a partition $0=t_0<t_1<\cdots<t_n=b$ of the interval $[0,b]$ such that, for any $k=0,1,\cdots,n$,
$$
\|\alpha(t)-\alpha(s)\|<2\quad\mbox{ and }\quad\|\alpha^*(t)\dot{\alpha}(t)-\alpha^*(s)\dot{\alpha}(s)\|<\frac{\epsilon}{2}
$$
if $s,t\in [t_k,t_{k+1}]$. The first condition implies that there exist $Z_k\in\matsa$ such that $\|Z_k\|<\pi$ and
$e^{iZ_k}=\alpha^*(t_k)\alpha(t_{k+1})$. Moreover, if $\log$ denotes the principal branch of the logarithm, then
$$
Z_k=\log(\alpha^*(t_k)\alpha(t_{k+1})).
$$
Now note that, for any fixed $t\in [0,b]$, the map $g:h\mapsto \frac1h \log(\alpha^*(t)\alpha(t+h))$, is well-defined and analytic, for sufficiently small $h$. Moreover
$$
g(h)\xrightarrow[\ h\to 0\ ]{} \left.\frac{d}{ds}\log \alpha^*(t)\alpha(t+s)\right|_{s=0}=\alpha^*(t)\dot{\alpha}(t).
$$
Then, taking a refinement of the partition if necessary, we can also assume that
$$
\|Z_k-\alpha^*(t_k)\dot{\alpha}(t_k)\|<\frac{\epsilon}{2}
$$
for any $k=0,1,2\cdots, n$. Consider the map $P_{\epsilon}:[0,b]\to\matu$ which is defined as
$$
P_{\epsilon}(t)=\alpha(t_k)e^{\frac{t-t_k}{t_{k+1}-t_k}Z_k}\mbox{ for }t\in [t_k,t_{k+1}].
$$
Then $P_{\epsilon}$ is certainly a polygonal path, and it is straightforward to see that verifies the claim of the lemma.
\end{proof}

\begin{teo}\label{distancia}
Let $U\in \matu$ and $V=Ue^{iZ}$, with $Z\in\matsa$ and $\|Z\|\le \pi$. Then, the curve $\gamma(t)=ue^{itZ/b}$ is optimal among piecewise smooth curves $\alpha:[0,b]\to\matu$ joining $U$ to $V$, with respect to the action $\ese$ defined by a symmetric Lagrangian, and in particular $\inf S=b\ele(Z/b)$.
\end{teo}
\begin{proof}
Given $\epsilon>0$, let $\delta>0$ such that $\|X-Y\|\leq \delta$ implies that $|\ele(X)-\ele(Y)|<\epsilon/b$ for every $X$ and $Y$ in a ball big enough. Then,  let $P_{\delta}$ be a polygonal path in $\matu$ as in the previous lemma, joining $U$ to $V$, such that
$$
\|\dot\alpha-\dot{P}_\delta\|=\|\alpha^*\dot{\alpha}-P_{\delta}^*\dot{P}_{\delta}\|<\delta.
$$
Then by Proposition \ref{poli},
$$
\ese(\gamma)\leq\ese(P_\delta)=\int_0^b \ele(\dot{P}(t))\,dt \leq \eps+ \int_0^b \ele(\dot{\alpha}(t))\,dt<\epsilon+\ese(\alpha),
$$
Therefore, $\ese(\gamma)\leq \ese(\alpha)$.
\end{proof}

\begin{rem}
If $\alpha:[0,b]\to \matu$ is just rectifiable (that is, differentiable $p.p.$ with $\dot{\alpha}(t)$ bounded), the approximation by a polygonal path can be carried out with no major changes, and the proof of the previous theorem shows that in fact, geodesic segments are optimal among {\em rectifiable} arcs joining given endpoints.
\end{rem}

\subsection{Uniqueness of short paths}\label{unicas}

Concerning uniqueness, it is clear that the convexity condition of $\ele$ should be strenghtened.

Let us agree to call $\ele$ {\em nondegenerate} if, given $A,B\in \matsa$, the existence of $\lambda\in (0,1)$ such that the inequality of the convexity condition turns into an equality, implies that there exists $s\ge 0$ such that $A=sB$. In other words, if
$$
\ele(\lambda A+(1-\lambda)B)=\lambda\ele(A)+(1-\lambda)\ele(B)
$$
for some $\lambda\in (0,1)$, then $A=sB$ for some $s\ge 0$. This is a notion of nondegeneracy outside lines. 

The other notion at play here is the strongest notion of {\em strict convexity} of $\ele$, which of course means that if the equality above holds for some $\lambda\in (0,1)$, then $A=B$. A simple example of a strictly convex Lagrangian is the energy functional, given by the square of the Frobenius norm on $\matsa$.

\begin{rem}\label{normastrict}
Note that strict convexity implies nondegeneracy, but the notion of nondegeneracy is relevant since no linear space norm can be strictly convex. In fact, it is usual to say that a norm $\|\cdot\|$ on a linear space is strictly convex when the weaker condition (nondegeneracy) stated above holds, which due to the homogeneity of the norm amounts to say that
$$
\|A+B\|=\|A\|+\|B\|
$$
implies $A=sB$ for some $s\ge 0$, and geometrically, is equivalent to the fact that the unit ball of the normed space has no segments.
\end{rem}

We begin with a technical lemma. Recall that if $A\in\matsa$, then $\avi{1}{A}$, $\ldots$, $\avi{n}{A}$ denotes the eigenvalues of $A$ arranged in non-increasing way.

\begin{lem}\label{lanza la bola chico}
Let  $X,Y,Z\in\matsa$ be such that $e^{Z}=e^{iX}e^{iY}$ and $\|Z\|<\pi$. If  $\avi{k}{X}=r \avi{k}{Z}$ and $\avi{k}{Y}=(1-r) \avi{k}{Z}$  for some $r \in [0,1]$ and every $k\in\{1,\ldots,n\}$, then $X=r Z$ and $Y=(1-r) Z$.
\end{lem}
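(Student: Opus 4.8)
The plan is to split the statement into a \emph{rigidity} part---that $X$, $Y$, $Z$ must pairwise commute---and a purely spectral part that finishes the argument once commutativity is known. First some reductions. The hypotheses give $\|X\|=\max_k|\lambda_k(X)|=r\|Z\|$ and $\|Y\|=(1-r)\|Z\|$, so $\|X\|+\|Y\|=\|Z\|<\pi$, and in particular $\|X\|,\|Y\|<\pi$. If $r=0$ then $X=0$ and $e^{iY}=e^{iZ}$ with $\|Y\|=\|Z\|<\pi$, so both are principal logarithms of the same unitary and $Y=Z$; the case $r=1$ is symmetric. Hence I may assume $r\in(0,1)$. It is worth noting at the outset that no argument based solely on a symmetric Lagrangian, nor the modulus inequalities of Theorem~\ref{porron}, Corollary~\ref{corti} or Proposition~\ref{muchas}, can prove the lemma: all of these see only singular values, so the best they can yield is a relation like $|X|=r\,U^*|Z|U$, i.e. unitary equivalence, never the alignment of eigenvectors. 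The relation $e^{iX}e^{iY}=e^{iZ}$ must therefore be used directly.

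The heart of the matter is to prove that $X,Y,Z$ commute. By Theorem~\ref{porron} there are unitaries $U,V$ with $e^{iX}e^{iY}=e^{i(UXU^*+VYV^*)}$; writing $W=UXU^*+VYV^*$ we have $\|W\|\le\|X\|+\|Y\|=\|Z\|<\pi$ and $e^{iW}=e^{iZ}$, so by uniqueness of the principal logarithm $W=Z$, that is $Z=UXU^*+VYV^*$. Since $\lambda_k(UXU^*)=r\lambda_k(Z)$ and $\lambda_k(VYV^*)=(1-r)\lambda_k(Z)$, this forces the coordinatewise identity $\lambda_k(Z)=\lambda_k(UXU^*)+\lambda_k(VYV^*)$ for all $k$, i.e. the equality case of the Ky Fan inequalities $\sum_{k\le m}\lambda_k(A+B)\le\sum_{k\le m}\lambda_k(A)+\sum_{k\le m}\lambda_k(B)$. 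Peeling off a common top eigenvector inductively produces a common eigenbasis of $UXU^*$, $VYV^*$ and $Z$, whence $UXU^*=rZ$ and $VYV^*=(1-r)Z$. Feeding this back, $e^{iX}e^{iY}=e^{iZ}$ becomes the rigid unitary identity $U^*e^{irZ}U\,V^*e^{i(1-r)Z}V=e^{iZ}$, and it remains to show that this forces $U$ and $V$ to preserve the eigenspaces of $Z$, equivalently that $X,Y,Z$ commute.

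This last implication is the step I expect to be the main obstacle, precisely because the conjugating unitaries supplied by Thompson's theorem are not canonical and the spectral norm is not strictly convex, so equality in the triangle inequality does not by itself force geodesic collinearity when $n\ge 3$. The model to keep in mind is $n=2$: writing $e^{iX},e^{iY},e^{iZ}$ in $SU(2)$ coordinates with axes $\hat p,\hat q$ and half-gaps $\alpha=r(\lambda_1-\lambda_2)/2$, $\beta=(1-r)(\lambda_1-\lambda_2)/2$, the scalar (trace) part of the product identity reads $\cos\alpha\cos\beta-\sin\alpha\sin\beta\,(\hat p\cdot\hat q)=\cos(\alpha+\beta)$; since $0<\alpha+\beta<\pi$ this forces $\hat p\cdot\hat q=1$, hence $[X,Y]=0$. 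I would try to promote this trace comparison into a general rigidity statement---that among matrices with the prescribed eigenvalues the quantity $\operatorname{Re}\operatorname{tr}(e^{iX}e^{iY})$ is maximized exactly at the commuting configuration $X=rZ$, $Y=(1-r)Z$---or, alternatively, to prove commutativity by induction on the dimension, peeling off the extreme eigenvalue of $Z$. In either approach the strict bound $\|Z\|<\pi$ (which keeps all relevant eigenvalue arcs inside $(-\pi,\pi)$ and the logarithm single-valued) is exactly what is needed, and the delicate point will be to control arcs of length between $\pi/2$ and $\pi$, where the naive numerical-range/argument estimates break down.

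Once commutativity is in hand the remainder is short. Diagonalize $X,Y,Z$ in a common orthonormal basis $\{u_j\}$, with $Xu_j=x_ju_j$, $Yu_j=y_ju_j$, $Zu_j=z_ju_j$. Evaluating $e^{iX}e^{iY}=e^{iZ}$ on $u_j$ gives $x_j+y_j\equiv z_j\ (\mathrm{mod}\ 2\pi)$, and since $|x_j+y_j|\le\|X\|+\|Y\|=\|Z\|<\pi$ and $|z_j|<\pi$ the congruence is an equality: $x_j+y_j=z_j$ for every $j$. The eigenvalue hypotheses say $\{x_j\}=\{r\lambda_k(Z)\}$ and $\{y_j\}=\{(1-r)\lambda_k(Z)\}$ as multisets, whence $\sum_j x_j^2=r^2\sum_j z_j^2$ and, expanding $\sum_j z_j^2=\sum_j(x_j+y_j)^2$, also $\sum_j x_jy_j=r(1-r)\sum_j z_j^2$. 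Therefore $\sum_j x_jz_j=\sum_j x_j^2+\sum_j x_jy_j=r\sum_j z_j^2$, which is exactly the equality case of Cauchy--Schwarz, $\sum_j x_jz_j\le(\sum_j x_j^2)^{1/2}(\sum_j z_j^2)^{1/2}=r\sum_j z_j^2$. Hence $x_j=rz_j$ for all $j$, i.e. $X=rZ$, and consequently $Y=(1-r)Z$, as claimed.
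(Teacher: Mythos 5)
Your reductions, the Thompson/Ky Fan step, and the closing Cauchy--Schwarz computation are all correct, but the proposal has a genuine gap exactly where you say it does: the commutativity of $X$, $Y$, $Z$ is never established, and that step is the entire content of the lemma. After the Ky Fan equality case you have only recovered the hypothesis in the form $X=rU^*ZU$, $Y=(1-r)V^*ZV$ (unitary equivalence to $rZ$ and $(1-r)Z$, which was given from the start; note that the Thompson detour adds nothing here, since $Z=rZ+(1-r)Z$ already exhibits commuting representatives of the two equivalence classes summing to $Z$). The essential rigidity statement --- that $U^*e^{irZ}U\,V^*e^{i(1-r)Z}V=e^{iZ}$ forces $U^*ZU=V^*ZV=Z$ --- is verified only for $n=2$ by an $SU(2)$ trace computation, and is otherwise replaced by two proposed strategies (``I would try to promote this trace comparison\dots or, alternatively, to prove commutativity by induction'') neither of which is carried out. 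Everything else in the proposal is either a restatement of hypotheses or routine, so as written this is an outline with its central step missing, not a proof.

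For comparison, the paper fills precisely this hole with a short geometric argument, close in spirit to your ``peeling'' strategy but with the tool that makes the peeling work: uniqueness of minimal geodesics on the Euclidean sphere. Take $\xi$ a unit eigenvector of $Z$ with $|Z|\xi=\|Z\|\xi$. The curve $\alpha(t)=e^{itZ}\xi$ satisfies $\ddot\alpha=-\|Z\|^2\alpha$, so it is a great-circle arc in the unit sphere of $\mathbb{C}^n$ of length $\|Z\|<\pi$, hence the \emph{unique} shortest curve between its endpoints. The broken curve $\beta$ that follows $e^{2itX}\xi$ and then $e^{iX}e^{2i(t-1/2)Y}\xi$ joins the same endpoints and has length at most $\|X\|+\|Y\|=r\|Z\|+(1-r)\|Z\|=\|Z\|$ (your observation that $\|X\|=r\|Z\|$ and $\|Y\|=(1-r)\|Z\|$ is exactly what is used). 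Uniqueness forces $\beta$ to trace the same arc, which makes $\xi$ an eigenvector of $X$ and of $Y$; iterating on the orthogonal complement of $\xi$ produces the common eigenbasis, after which your final Cauchy--Schwarz paragraph (a step the paper leaves implicit) finishes the argument cleanly. Note that the strict bound $\|Z\|<\pi$ enters only as the uniqueness threshold for spherical geodesics, so the difficulty you anticipated with ``arcs of length between $\pi/2$ and $\pi$'' never arises: the sphere argument handles all lengths below $\pi$ uniformly.
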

\begin{proof}
It is enough to show that $Z$ shares an orthonormal basis of eigenvalues with $X$ and $Y$. Let $\xi$ be an unitary eigenvector of $Z$  such that $|Z|\xi=\|Z\|\xi$. Consider the unit sphere $S^{n-1}\subset \mathbb C^n$ and the maps $\alpha,\beta:[0,1]\to S^{n-1}$ given by $\alpha(t)=e^{itZ}\xi$,
$$
\beta(t)=\left\{\begin{array}{rr}
e^{2itX}\xi &\mbox{ if }t\in[0,1/2]\\
e^{iX}e^{2i(t-1/2)Y}\xi & \mbox{ if } t\in[1/2,1]\end{array}\right.\, .
$$
In particular, $\alpha$ and $\beta$ have the same extreme points. A simple computation shows that, with respect to the natural Riemannian structure, $\mbox{Long} (\alpha)=\mu$ and $\mbox{Long} (\beta)\leq \mu$. But, since
$$
\ddot{\alpha}(t)=e^{itZ}(-Z^{\,2})\xi=-e^{itZ}|Z|^2\xi=-\|Z\|^2 e^{itZ}\xi=-\|Z\|^2 \alpha(t)
$$
and $\mbox{Long}(\alpha)=\|Z\|< \pi$, then $\alpha$ is the unique short geodesic of the sphere $S^{n-1}$ joining $\xi$ with $e^{iZ}\xi$. So, $\mbox{Graph}(\alpha)=\mbox{Graph}(\beta)$ and $\xi$ is also an eigenvalue of $X$ and $Y$. Iterating this procedure, we can conclude that $X$, $Y$ and $Z$ share a common orthonormal basis of eigenvalues.
\end{proof}

\begin{teo}\label{cambiadito}
Assume that $\ele$ is strictly convex. Let  $X,Y\in\matsa$ with norm less or equal than $\pi$, and $Z\in\matsa$ such that $\|Z\|<\pi$ and $e^{iZ}=e^{iX}e^{iY}$. Consider the geodesic segment $\gamma:[0,b]\to\matu$ defined by $\gamma(t)=e^{itZ/b}$, and the polygonal $P:[0,1]\to\matu$defined by
$$
\begin{cases}
e^{i\frac{t}{t_0}X}&\mbox{if $t\in[0,t_0]$}\\
e^{iX}e^{i\frac{t-t_0}{b-t_0}Y}&\mbox{if $t\in[t_0,b]$}\\
\end{cases}\,.
$$
for some $t_0\in(0,b)$. If $\ese(P)=\ese(\gamma)$ then $X=\frac{t_0}{b} Z$ and $P=\gamma$.
\end{teo}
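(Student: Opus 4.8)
The plan is to follow the two inequalities buried in the proof of Proposition \ref{poli} in the two-segment case, and to show that the hypothesis $\ese(P)=\ese(\gamma)$ forces each of them to be an equality. Writing $\lambda=t_0/b$ and using that $\ele$ depends only on singular values, so that $\ele(\tilde X/t_0)=\ele(X/t_0)$ for $\tilde X=W_1U|X|U^*W_1^*$ (and similarly $\tilde Y=W_2V|Y|V^*W_2^*$), where $U,V$ come from Theorem \ref{porron}/Corollary \ref{corti} and $W_1,W_2$ from Theorem \ref{modulos}, I would record
$$
\ese(P)=b\Big[\lambda\,\ele\big(\tilde X/t_0\big)+(1-\lambda)\,\ele\big(\tilde Y/(b-t_0)\big)\Big]\ge b\,\ele\Big(\tfrac1b(\tilde X+\tilde Y)\Big)\ge b\,\ele\big(|Z|/b\big)=\ese(\gamma),
$$
the first inequality being Jensen's and the second being monotonicity (P3), justified by $|Z|\le|UXU^*+VYV^*|\le\tilde X+\tilde Y$. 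Equality of the two extremes forces equality in both steps.

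First I would extract a \emph{strict monotonicity} lemma: if $\ele$ is strictly convex and $0\le B\le A$ with $\ele(A)=\ele(B)$, then $A=B$. Indeed, applying (P3) and convexity to the midpoint $M=\tfrac12(A+B)$ gives $\ele(B)\le\ele(M)\le\tfrac12\ele(A)+\tfrac12\ele(B)=\ele(B)$, so equality holds in convexity at $\lambda=\tfrac12$, and strict convexity yields $A=B$. Equality in the monotonicity step of the chain then gives $\tilde X+\tilde Y=|Z|$, while equality in Jensen together with strict convexity gives $\tilde X/t_0=\tilde Y/(b-t_0)$; combining the two yields $\tilde X=\tfrac{t_0}{b}|Z|$ and $\tilde Y=\tfrac{b-t_0}{b}|Z|$. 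In particular $s(X)=\tfrac{t_0}{b}\,s(Z)$ and $s(Y)=\tfrac{b-t_0}{b}\,s(Z)$ as sorted lists, whence $\tr(X^2)=\big(\tfrac{t_0}{b}\big)^2\tr(Z^2)$ and $\tr(Y^2)=\big(\tfrac{b-t_0}{b}\big)^2\tr(Z^2)$.

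The hard part will be passing from these modulus identities to \emph{signed} eigenvalue identities, since the Thompson machinery only controls $|X|,|Y|,|Z|$ and discards all sign information. The key extra input is that equality squeezes the chain $|Z|\le|UXU^*+VYV^*|\le\tilde X+\tilde Y=|Z|$, so $|Z|=|UXU^*+VYV^*|$. I would then sharpen Corollary \ref{corti}: setting $W'=UXU^*+VYV^*$, Theorem \ref{porron} gives $e^{iZ}=e^{iW'}$, and on each eigenspace $H_\theta$ of the unitary $e^{iZ}$ the bound $\|Z\|<\pi$ forces $Z|_{H_\theta}=\theta\,I$ while $W'|_{H_\theta}$ has eigenvalues in $\theta+2\pi\Z$; hence $\tr|W'|\ge\tr|Z|$ with equality iff $W'=Z$. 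Since $|Z|=|W'|$ forces equality of traces, this yields the signed identity $Z=UXU^*+VYV^*$.

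Now set $A=UXU^*$ and $B=VYV^*$, so $Z=A+B$. Expanding $\tr(Z^2)=\tr(A^2)+2\tr(AB)+\tr(B^2)$ and using the trace relations above gives $\tr(AB)=\tfrac{t_0(b-t_0)}{b^2}\tr(Z^2)=\sum_k\svi{k}{A}\svi{k}{B}$, the equality case of von Neumann's trace inequality. Comparing with the Hermitian (signed) form $\tr(AB)\le\sum_k\avi{k}{A}\avi{k}{B}\le\sum_k\svi{k}{A}\svi{k}{B}$ forces all three quantities to coincide, so $A$ and $B$ are simultaneously diagonalizable with eigenvalues in a common decreasing order. Passing to the eigenvalue vectors $\vec a,\vec b,\vec z=\vec a+\vec b$, the relation $\langle\vec a,\vec b\rangle=\|\vec a\|\,\|\vec b\|$ is the equality case of Cauchy--Schwarz, so $\vec a$ and $\vec b$ are positively proportional; the norm identities then give $\vec a=\tfrac{t_0}{b}\vec z$, that is $\avi{k}{X}=\tfrac{t_0}{b}\avi{k}{Z}$ and $\avi{k}{Y}=\tfrac{b-t_0}{b}\avi{k}{Z}$ for every $k$ (the case $Z=0$ being trivial). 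Finally Lemma \ref{lanza la bola chico} with $r=t_0/b$ yields $X=\tfrac{t_0}{b}Z$ and $Y=\tfrac{b-t_0}{b}Z$, and substituting back into the definition of $P$ (its two branches becoming $e^{itZ/b}$ and $e^{it_0Z/b}e^{i(t-t_0)Z/b}=e^{itZ/b}$) shows $P=\gamma$. I expect the sign-recovery step, namely the sharpening of Corollary \ref{corti} combined with the von Neumann equality analysis, to be the main obstacle; everything else is equality-case analysis of standard inequalities.
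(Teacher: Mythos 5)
Your proof is correct, but it takes a detour that the paper avoids, and the detour is precisely the part you flagged as the main obstacle. The paper never passes to the moduli $\tilde X,\tilde Y$: since $\ele$ is unitarily invariant, $\ele(X/t_0)=\ele(UXU^*/t_0)$ for the Thompson unitaries themselves, so the Jensen step can be run directly on the \emph{signed} conjugates, giving $\ese(P)\ge b\,\ele\bigl(\tfrac1b(UXU^*+VYV^*)\bigr)\ge b\,\ele(Z/b)$, where the second inequality uses $|Z|\le|UXU^*+VYV^*|$ only at the level of singular values. Equality in the monotonicity step then gives $|Z|=|UXU^*+VYV^*|$, hence $\|UXU^*+VYV^*\|<\pi$, and $e^{iZ}=e^{i(UXU^*+VYV^*)}$ forces $Z=UXU^*+VYV^*$ by injectivity of the exponential on Hermitian matrices with spectrum in $(-\pi,\pi)$ (your eigenspace/trace argument proves the same fact, just more laboriously); equality in the Jensen step plus strict convexity gives $UXU^*/t_0=VYV^*/(b-t_0)$ \emph{with signs intact}, so combining the two yields $UXU^*=\tfrac{t_0}{b}Z$ and $VYV^*=(1-\tfrac{t_0}{b})Z$ immediately, and Lemma \ref{lanza la bola chico} finishes exactly as in your write-up. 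Because you instead ran Jensen on $\tilde X=W_1U|X|U^*W_1^*$ and $\tilde Y$, you only obtained singular-value proportionality and had to reconstruct the signed identities via the equality case of the Hermitian von Neumann (Theobald) inequality plus Cauchy--Schwarz; that argument is valid (your trace computations check out), but it imports an equality-case theorem the paper never needs, and it can in any case be shortened: writing $A=UXU^*$, $B=VYV^*$, $r=t_0/b$, the identities $Z=A+B$, $\tr(A^2)=r^2\tr(Z^2)$, $\tr(B^2)=(1-r)^2\tr(Z^2)$, $\tr(AB)=r(1-r)\tr(Z^2)$ give $\|A/r-B/(1-r)\|_F^2=\tr(Z^2)-2\tr(Z^2)+\tr(Z^2)=0$ directly, with no diagonalization at all. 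On the credit side, your explicit strict-monotonicity lemma ($0\le B\le A$ and $\ele(A)=\ele(B)$ imply $A=B$) is exactly what the paper needs at the step where it cryptically cites (P2), so that portion of your argument is actually cleaner than the original.
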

\begin{proof}
By Proposition \ref{porron}, there exist unitary matrices $U$ and $V$ such that
$$
e^{iZ}=e^{i(UXU^*+VYV^*)} \peso{and} |Z|\leq |UXU^*+VYV^*|\,,
$$
and by the computations made in Proposition \ref{poli} (Equation \eqref{cc})
$$
\ese(P)=t_0\,\ele\left(\frac{X}{t_0}\right)+(b-t_0)\,\ele\left(\frac{Y}{b-t_0}\right)\,.
$$
Then, using the properties of $\ele$, the hypothesis $\ese(P)=\ese(\gamma)$ implies that
\begin{align*}
\ese(\gamma)&=\ese(P)=t_0\,\ele\left(\frac{X}{t_0}\right)+(b-t_0)\,\ele\left(\frac{Y}{b-t_0}\right)\\
           &=b\left(\frac{t_0}{b}\,\ele\left(\frac{UXU^*}{t_0}\right)+\frac{b-t_0}{b}\,\ele\left(\frac{VYV^*}{b-t_0}\right)\right)\\
           &\geq b\ele\left(\frac{UXU^*+VYV^*}{b}\right)\geq b\,\ele\Big(\frac{Z}{b}\Big)
           =\ese(\gamma).
\end{align*}
On one hand, this implies that $Z=UXU^*+VYV^*$.  Indeed, if $W=UXU^*+VYV^*$ then $|Z|\leq|W|$. But the above chain of identities implies that $\ele(Z)=\ele(W)$, and (P2) in Proposition \ref{propiedades de L} implies that $|Z|=|W|$. Hence, $0\leq |Z|=|W|<\pi$. Since $e^{iZ}=e^{i(UXU^*+VYV^*)}$ we get the desired equality. On the other hand, since $\ele$ is strictly convex if $r=t_0/b$ then
\begin{align*}
rZ=UXU^* \peso{and} (1-r)Z=VYV^*.
\end{align*}
Now, by Lemma \ref{lanza la bola chico} we obtain that $X=UXU^*$ and $Y=VYV^*$ which concludes the proof.
\end{proof}

\begin{teo}\label{esunica}
Assume that $\ele$ is strictly convex. Let $Z\in\matsa$ be such that $\|Z\|<\pi$. Then, the geodesic segment $\delta:[0,b]\to\matu$ defined by $\gamma(t)=Ue^{itZ/b}$ is the unique piecewise $C^1$ curve in $\matu$ joining $U$ to $V=Ue^{iZ}$, and $\ese(\delta)=b\ele(Z/b)$.
\end{teo}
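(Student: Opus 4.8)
The plan is to show that \emph{any} minimizer must coincide with the geodesic segment $\delta$. First I would reduce to the case $U=I$: since $\ele$ is unitarily invariant the action is left invariant (that is, $\ese(U\alpha)=\ese(\alpha)$), so I may assume $\delta(t)=e^{itZ/b}$, $\alpha(0)=I$, $\alpha(b)=e^{iZ}$, and by Theorem \ref{distancia} the minimal value is $\ese(\delta)=b\,\ele(Z/b)$. Let $\alpha:[0,b]\to\matu$ be any piecewise $C^1$ curve joining $I$ to $e^{iZ}$ with $\ese(\alpha)=b\,\ele(Z/b)$. Given a partition $0=t_0<\cdots<t_m=b$ fine enough that each $X_j:=\log(\alpha(t_{j-1})^*\alpha(t_j))$ (principal branch) satisfies $\|X_j\|<\pi$, let $P$ be the polygonal that interpolates $\alpha$ geodesically at the points $t_j$. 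Applying Theorem \ref{distancia} on each subinterval $[t_{j-1},t_j]$ gives $\ese(P|_{[t_{j-1},t_j]})\le \ese(\alpha|_{[t_{j-1},t_j]})$, whence $\ese(P)\le\ese(\alpha)=\ese(\delta)$; on the other hand Proposition \ref{poli} gives $\ese(P)\ge\ese(\delta)$. Thus $\ese(P)=\ese(\delta)$, and the whole problem reduces to the following rigidity statement: a polygonal from $I$ to $e^{iZ}$ with $\|Z\|<\pi$ whose action equals that of the geodesic must \emph{be} the geodesic.

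I would prove this rigidity by induction on the number $m$ of pieces. The base case $m=2$ is exactly Theorem \ref{cambiadito}. For the inductive step, write $P$ with pieces $X_1,\dots,X_{m+1}$ and choose a Hermitian $Y$ with $\|Y\|\le\pi$ and $e^{iY}=e^{iX_2}\cdots e^{iX_{m+1}}$, so that $e^{iZ}=e^{iX_1}e^{iY}$. Comparing $P$ with the two-piece polygonal $P'$ having pieces $X_1$ on $[0,t_1]$ and $Y$ on $[t_1,b]$: applying Theorem \ref{distancia} on $[t_1,b]$ to the geodesic with exponent $Y$ yields $(b-t_1)\ele(Y/(b-t_1))\le \ese(P|_{[t_1,b]})$, hence $\ese(P')\le\ese(P)=\ese(\delta)$, while Proposition \ref{poli} gives $\ese(P')\ge\ese(\delta)$. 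So $\ese(P')=\ese(\delta)$ and Theorem \ref{cambiadito} forces $P'=\delta$; in particular $X_1=\tfrac{t_1}{b}Z$, $Y=\tfrac{b-t_1}{b}Z$ (so $\|Y\|<\pi$ strictly), and the tail $P|_{[t_1,b]}$ is an $m$-piece polygonal joining $e^{iX_1}$ to $e^{iZ}$ whose action equals the geodesic value $(b-t_1)\ele(Y/(b-t_1))$. After left translating by $e^{-iX_1}$ and rescaling the parameter, the inductive hypothesis applies and shows this tail is the geodesic $e^{iX_1}e^{i(t-t_1)Y/(b-t_1)}$; together with the first piece this gives $P=\delta$.

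Finally, the rigidity lemma yields $P=\delta$ for every sufficiently fine partition, so $\alpha(t_j)=P(t_j)=\delta(t_j)$ at all partition points. Since any $s\in(0,b)$ can be inserted into such a partition, we obtain $\alpha(s)=\delta(s)$ for all $s$, that is $\alpha=\delta$. I expect the main obstacle to be the inductive step of the rigidity lemma: the delicate point is not the bookkeeping of actions but ensuring that the ``collapsed'' tail exponent $Y$ inherits the strict bound $\|Y\|<\pi$ needed to re-enter the induction. This is precisely what the two-piece rigidity of Theorem \ref{cambiadito} provides, since it identifies $Y$ with $\tfrac{b-t_1}{b}Z$; without the strict inequality $\|Z\|<\pi$ and the resulting $\|Y\|<\pi$, neither Lemma \ref{lanza la bola chico} nor the inductive hypothesis would be available. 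A secondary technical point is the choice of partitions fine enough to keep every increment $\|X_j\|<\pi$, which is guaranteed by the uniform continuity of $\alpha$.
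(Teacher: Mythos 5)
Your proof is correct, but it reaches the conclusion by a longer route than the paper, using the same three ingredients: Theorem \ref{distancia} applied on subintervals, Proposition \ref{poli}, and the two-piece rigidity result Theorem \ref{cambiadito}. The paper's proof avoids both the fine-partition approximation and the induction entirely: fix a single $t_0\in(0,b)$, write $\alpha(t_0)=e^{iX}=e^{iZ}e^{-iY}$ with $\|X\|\le\pi$, $\|Y\|\le\pi$ (so that $e^{iZ}=e^{iX}e^{iY}$), and let $P$ be the two-piece polygonal with exponents $X$ on $[0,t_0]$ and $Y$ on $[t_0,b]$ --- this is your $P'$, but built directly from the single value $\alpha(t_0)$ rather than obtained by collapsing the tail of an interpolating polygonal. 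The same sandwich you use,
$$
\ese(\gamma)\le\ese(P)\le \int_0^{t_0}\ele(\dot\alpha)\,dt+\int_{t_0}^{b}\ele(\dot\alpha)\,dt=\ese(\alpha)=\ese(\gamma),
$$
(Proposition \ref{poli} on the left, Theorem \ref{distancia} on each of the two subintervals on the right) forces $\ese(P)=\ese(\gamma)$, and Theorem \ref{cambiadito} then gives $X=\frac{t_0}{b}Z$, i.e.\ $\alpha(t_0)=\gamma(t_0)$; since $t_0$ was arbitrary, $\alpha=\gamma$. What your extra machinery buys is a genuinely stronger intermediate statement --- any polygonal with arbitrarily many pieces joining $I$ to $e^{iZ}$ whose action equals the minimum must be the geodesic --- and your inductive step is sound: the key point you flag, that Theorem \ref{cambiadito} identifies the collapsed exponent $Y$ with $\frac{b-t_1}{b}Z$ and hence preserves the strict bound $\|Y\|<\pi$ needed to re-enter the induction, is handled correctly, as is the uniform-continuity argument keeping each $\|X_j\|<\pi$. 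But, as your own inductive step makes plain, the induction is nothing more than repeated application of the two-piece case; the paper's observation is that for uniqueness one application per point of the curve already suffices, because the two-piece polygonal through $\alpha(t_0)$ pins down that point directly.
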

\begin{proof}
Without lost of generality we can assume that $U=1$. Suppose that $\alpha$ is any short, piecewise smooth curve joining $1$ to $e^{iZ}$. Let $t_0\in (0,1)$ and let $\alpha(t_0)=e^{iX}=e^{iZ}e^{-iY}$, with $\|Y\|\le \pi$, $\|X\|\le \pi$. Consider the polygonal $P:[0,b]\to\matu$ defined by
$$
\begin{cases}
e^{i\frac{t}{t_0}X}&\mbox{if $t\in[0,t_0]$}\\
e^{iX}e^{i\frac{t-t_0}{b-t_0}Y}&\mbox{if $t\in[t_0,b]$}\\
\end{cases}\,.
$$
Then, by Proposition \ref{poli} and Theorem \ref{distancia} applied to each segment,
\begin{align*}
\ese(\gamma)&\le \ese(P)\leq \int_0^{t_0}\ele(\dot{\alpha})\,dt +\int_{t_0}^b\ele(\dot{\alpha})\,dt=\ese(\alpha)=\ese(\gamma),
\end{align*}
Hence $\ese(\gamma)=\ese(P)$, and by Theorem \ref{cambiadito} we get that $X=\frac{t_0}{b}Z$.
\end{proof}

This settles Problem \ref{problema2} when the Lagrangian is strictly convex: the geodesic segments are optimal and unique {as functions}. Regarding the second question of that problem, we have the following result, that settles this poblem when the Lagrangian is nondengenerate (for instance, if $\ele$ is a strictly convex norm on a linear space, Remark \ref{normastrict}): in this case, geodesic segments are optimal and unique modulo a reparametrization of the path, that is, they are unique in a geometrical sense.

\begin{teo}\label{esunicasalvo}
Assume that $\ele$ is nondegenerate. Let $Z\in\matsa$ be such that $\|Z\|<\pi$. Then, if $\alpha:[0,b]\to\matu$ is an optimal path of the minimization problem given by $\ele$ with given endpoints $U,V$, $\alpha$ must be a reparametrization of the geodesic segment $\gamma:[0,b]\to\matu$ defined by $\gamma(t)=Ue^{itZ/b}$.
\end{teo}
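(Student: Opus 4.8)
The plan is to follow the skeleton of the proof of Theorem \ref{esunica}, but to replace the use of strict convexity by the weaker nondegeneracy hypothesis. This weakening will only pin down $\alpha(t_0)$ up to its position along the \emph{trace} of $\gamma$, rather than forcing constant speed; so I would fix an arbitrary $t_0\in(0,b)$, prove that $\alpha(t_0)$ lies on the image of $\gamma$, and then assemble these pointwise statements into a reparametrization, whose monotonicity is the delicate point.

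By left-invariance of $\ese$ (the integrand depends only on $\alpha^*\dot\alpha$) we may assume $U=1$ and $V=e^{iZ}$. Fix $t_0\in(0,b)$, write $\alpha(t_0)=e^{iX}$ with $\|X\|\le\pi$, and choose $Y$ with $\|Y\|\le\pi$ and $e^{iX}e^{iY}=e^{iZ}$. Let $P\colon[0,b]\to\matu$ be the two-piece polygonal through $e^{iX}$ as in Theorem \ref{cambiadito}. Since the restrictions $\alpha|_{[0,t_0]}$ and $\alpha|_{[t_0,b]}$ join the same endpoints as the two geodesic segments composing $P$, Theorem \ref{distancia} applied to each piece gives $\ese(P)\le\ese(\alpha)$, while Proposition \ref{poli} gives $\ese(\gamma)\le\ese(P)$; as $\alpha$ and $\gamma$ are both optimal they share the minimal value, so $\ese(\alpha)=\ese(\gamma)$ and therefore $\ese(P)=\ese(\gamma)$.

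Now I would reproduce the chain of (in)equalities from the proof of Theorem \ref{cambiadito}. Theorem \ref{porron} produces unitaries $U,V$ with $e^{iZ}=e^{i(UXU^*+VYV^*)}$ and $|Z|\le|W|$, where $W:=UXU^*+VYV^*$; the equality $\ese(P)=\ese(\gamma)$ forces $\ele(W/b)=\ele(Z/b)$ and hence, exactly as in Theorem \ref{cambiadito}, $Z=W$. It also forces equality in the convexity step $\frac{t_0}{b}\ele(UXU^*/t_0)+\frac{b-t_0}{b}\ele(VYV^*/(b-t_0))\ge\ele(W/b)$, a convex combination with weight $\lambda=t_0/b\in(0,1)$. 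Here is the only place where the argument departs from Theorem \ref{cambiadito}: instead of strict convexity I invoke nondegeneracy, which yields $UXU^*/t_0=s\,VYV^*/(b-t_0)$ for some $s\ge0$ (the degenerate cases $UXU^*=0$ or $VYV^*=0$ being handled directly). Combining this proportionality with $UXU^*+VYV^*=Z$ gives $UXU^*=\rho Z$ and $VYV^*=(1-\rho)Z$ for some $\rho\in[0,1]$. Passing to eigenvalues (conjugation and multiplication by $\rho\ge0$ preserve the ordering) we get $\avi{k}{X}=\rho\,\avi{k}{Z}$ and $\avi{k}{Y}=(1-\rho)\avi{k}{Z}$ for all $k$, so Lemma \ref{lanza la bola chico} applies and delivers $X=\rho Z$, $Y=(1-\rho)Z$. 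In particular $\alpha(t_0)=e^{i\rho Z}=\gamma(\rho b)$, so the trace of $\alpha$ is contained in the trace of $\gamma$.

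Finally I would promote this trace inclusion to a genuine reparametrization. Assuming $Z\ne0$ (otherwise $U=V$ and the statement is trivial), the map $s\mapsto e^{isZ}$ is an embedding of $[0,1]$, so setting $\varphi(t):=b\rho(t)=\gamma^{-1}(\alpha(t))$ defines a continuous $\varphi\colon[0,b]\to[0,b]$ with $\alpha=\gamma\circ\varphi$, $\varphi(0)=0$, $\varphi(b)=b$, and $\varphi$ piecewise $C^1$ because $\alpha$ is. A direct computation gives $\alpha^*\dot\alpha=(\dot\varphi/b)\,iZ$, whence $\ese(\alpha)=\int_0^b g(\dot\varphi(t)/b)\,dt$, where $g(u):=\ele(u\,iZ)$ is convex, even (because $\ele(-A)=\ele(A)$) and satisfies $g(0)=0$. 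Comparing $\ese(\alpha)=\ese(\gamma)=b\,g(1/b)$ through Jensen's inequality together with the reverse triangle inequality $\int_0^b|\dot\varphi|\,dt\ge\big|\int_0^b\dot\varphi\,dt\big|=b$ forces $\int_0^b|\dot\varphi|\,dt=b$, i.e. $\dot\varphi\ge0$ almost everywhere, so $\varphi$ is nondecreasing and $\alpha$ is a monotone reparametrization of $\gamma$. I expect this last step to be the main obstacle: unlike strict convexity, nondegeneracy genuinely permits nonconstant optimal speeds along the geodesic trace, so one cannot conclude $\alpha=\gamma$, and the entire content of the theorem is the exclusion of backtracking. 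Moreover the extraction of monotonicity needs some care in the borderline situation where $g$ is affine near $1/b$ (equivalently $\ele$ is homogeneous along the ray through $iZ$), which is precisely the Finsler case and must be handled by the reverse triangle inequality rather than by Jensen alone.
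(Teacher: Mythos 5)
Your proposal follows the paper's own proof essentially step for step: the same reduction to $U=1$, the same two-piece polygonal through $\alpha(t_0)$, the same equality chain via Thompson's theorem forcing $Z=UXU^*+VYV^*$, nondegeneracy giving the proportionality $UXU^*=\rho Z$, $VYV^*=(1-\rho)Z$, and Lemma \ref{lanza la bola chico} yielding $\alpha(t_0)=e^{i\rho Z}$ on the trace of $\gamma$. The only departure is your final paragraph: where the paper simply asserts that this pointwise inclusion makes $\alpha$ a reparametrization of $\gamma$, you supply an explicit monotonicity argument (Jensen plus the reverse triangle inequality, which works because nondegeneracy forces $\ele(Z/b)>0$ and rules out $g$ being constant above $1/b$), a legitimate and welcome filling-in of a detail the paper leaves implicit.
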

\begin{proof}
We assume that $U=1$ and $V=e^{iZ}$. Let $t_0\in (0,1)$ and let $\alpha(t_0)=e^{iX}=e^{iZ}e^{-iY}$, with $\|Y\|\le \pi$, $\|X\|\le \pi$. Arguing as in the proof of Theorem \ref{cambiadito}, convexity of $\ele$ and minimality of $\alpha$ imply that $Z=UXU^*+VYV^*$. Now, nondegeneracy of $\ele$ implies also that there exists $s\ge 0$ such that
$$
\frac{UXU^*}{t_0}=s\frac{VYV^*}{b-t_0}.
$$
Now we take $s_0=\frac{st_0}{b-t_0}\ge 0$ and $r=(1+s_0)^{-1}$. Note that $r\in [0,1]$ and also that $rZ=UXU^*$, $(1-r)Z=VYV^*$. Invoking once again Lemma  \ref{lanza la bola chico}, it follows that $X=UXU^*$, $Y=VYV^*$. Thus $\alpha(t_0)=e^{irZ}$ and then $\alpha$ must be a reparametrization of the geodesic segment $\gamma$.
\end{proof}

Regarding uniqueness of paths when $\|U-V\|=2$ (or equivalently, when $V=Ue^{iZ}$ and $\|Z\|=\pi$), this property is not expected since taking $n=1$, $U=1$, $V=-1$ shows that there are two geodesic segments in the circumference ($=\mathcal U(1)$) joining $U,V$, and the situation worsens as $n$ gets bigger.

\section{Rectifiable distances in $\matu$ and angular metrics in the Grassmann manifold}

In this section, we focus in the particular case where $\ele$ is a unitarily invariant norm. In that case the action $\ese$ defines a length of curves and the length of the optimal path defines a distance in $\matu$. 

\subsection{Unitarily invariant norms and symmetric gauge functions}\label{nuis}

One of the most relevant properties of the uniform norm of matrices is the following: given two unitary matrices $U$ and $V$, then $\|UTV\|=\|T\|$. This property is shared by many other norms defined in $\mat$.

\begin{fed}\label{nui def}
A norm $\nnui{\cdot}$ defined in $\mat$ is called unitarily invariant if for every matrix $T$ and every pair of unitary matrices $U$ and $V$  it holds that
$\nnui{UTV}=\nnui{T}$.
\end{fed}

As a consequence of the singular value decomposition, $\nnui{T}=\nnui{|T|}$, and
\begin{equation}\label{formulagn}
\nnui{T}=\|T\|_\phi=\phi(s(T))\,,
\end{equation}
where $\phi$ is a {\it symmetric gauge function}, that is, a rearrangement invariant norm on $\mathbb R^n$, and depends only on the moduli of the coordinates of the vectors. The next theorem \cite{bhatia} will be useful in what follows:
\begin{teo}\label{nui gsf}
There is a bijection bewtween symmetric gauge functions $\phi$ on $\R^n$, and unitarily invariant norms $\|\cdot\|_{\phi}$ on $\mat$  given by equation (\ref{formulagn}) above.
\end{teo}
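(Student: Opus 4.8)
The plan is to produce two maps and prove they are mutually inverse. Given a symmetric gauge function $\phi$ on $\R^n$, define $\nfi{A}:=\phi(s(A))$ as in \eqref{formulagn}; conversely, given a unitarily invariant norm $N=\nnui{\cdot}$ on $\mat$, define $\phi_N(x):=N(\diag{x})$ for $x\in\R^n$. I would then verify three things: that $\phi_N$ is a symmetric gauge function, that $\nfi{\cdot}$ is a unitarily invariant norm, and that the two constructions undo one another. Everything except the triangle inequality for $\nfi{\cdot}$ is essentially bookkeeping with the singular value decomposition.

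First I would dispatch the map $N\mapsto\phi_N$. Homogeneity, positive definiteness and subadditivity of $\phi_N$ are inherited from $N$ restricted to the subspace of real diagonal matrices. Rearrangement invariance holds because a permutation matrix $P$ is unitary and $P\diag{x}P^*=\diag{Px}$, whence $\phi_N(Px)=N(P\diag{x}P^*)=N(\diag{x})=\phi_N(x)$; invariance under changing the signs or phases of the coordinates follows in the same way, conjugating by the diagonal unitary $\diag{\eps}$ with $|\eps_j|=1$. Thus $\phi_N$ is a symmetric gauge function. The two inversion identities then follow from the SVD: writing $A=U\diag{s(A)}V^*$ with $U,V$ unitary, unitary invariance of $N$ gives $N(A)=N(\diag{s(A)})=\phi_N(s(A))$, so $N$ is exactly the norm $\nfi{\cdot}$ associated with $\phi_N$; and since $s(\diag{x})$ is the decreasing rearrangement of $(|x_j|)_j$ while $\phi$ is invariant under rearrangements and moduli, $\phi(s(\diag{x}))=\phi(x)$, so the gauge function recovered from $\nfi{\cdot}$ is again $\phi$.

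The substantial step is to show that $\nfi{\cdot}$ is a unitarily invariant norm. Unitary invariance is immediate from $s(UAV)=s(A)$, and homogeneity and definiteness follow from the corresponding properties of $\phi$ together with $s(A)=0\iff A=0$. The real work is the triangle inequality $\nfi{A+B}\le\nfi{A}+\nfi{B}$. My plan is to invoke Ky Fan's inequality, which states that each map $A\mapsto\sum_{i=1}^k\svi{i}{A}$ is a norm (the Ky Fan $k$-norm), and therefore
\[
\sum_{i=1}^k\svi{i}{A+B}\le\sum_{i=1}^k\svi{i}{A}+\sum_{i=1}^k\svi{i}{B}\peso{for every}1\le k\le n.
\]
This is precisely weak majorization of $s(A+B)$ by $s(A)+s(B)$. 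Combined with the fact that symmetric gauge functions are monotone under weak majorization, it yields $\phi(s(A+B))\le\phi(s(A)+s(B))\le\phi(s(A))+\phi(s(B))$, the last inequality being the triangle inequality of $\phi$ as a norm on $\R^n$.

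The decisive ingredient, and the step I expect to be the crux, is the monotonicity of $\phi$ under weak majorization, which I would establish in two moves. First, a symmetric gauge function is coordinatewise monotone on the positive orthant, since an absolute norm is automatically monotone (Bauer--Stoer--Witzgall). Second, whenever $a\prec_w b$ for nonnegative decreasing vectors, one can interpolate a vector $c$ with $a\le c$ coordinatewise and $c\prec b$ a genuine majorization; by Hardy--Littlewood--P\'olya, $c$ lies in the convex hull of the permutations of $b$, so convexity and rearrangement invariance of $\phi$ give $\phi(c)\le\phi(b)$, while coordinatewise monotonicity gives $\phi(a)\le\phi(c)$. As an alternative that reuses the machinery already developed here, Thompson's Theorem \ref{modulos} reduces the triangle inequality to positive matrices, since $|A+B|\le V|A|V^*+W|B|W^*$ and $\nfi{\cdot}$ is monotone in the L\"owner order; but that route still requires subadditivity of $\nfi{\cdot}$ on positive operators, which again rests on the same weak-majorization monotonicity. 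Hence this monotonicity lemma is the heart of the matter.
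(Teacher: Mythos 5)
Your proof is correct, but there is essentially nothing in the paper to compare it against: Theorem \ref{nui gsf} is stated without proof, as a quotation of von Neumann's classical correspondence from Bhatia's book \cite{bhatia}. Your argument is, in substance, the standard proof found in that reference: the two maps $\phi\mapsto\phi(s(\cdot))$ and $N\mapsto N(\diag{\cdot})$, the inversion identities via the singular value decomposition, and --- the only substantial point --- the triangle inequality for $\|\cdot\|_\phi$, obtained from the weak majorization $s(A+B)\prec_w s(A)+s(B)$ (Ky Fan) together with the monotonicity of symmetric gauge functions under weak majorization, which you correctly reduce to the interpolation lemma, Birkhoff/Hardy--Littlewood--P\'olya, and the Bauer--Stoer--Witzgall monotonicity of absolute norms. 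Two minor remarks. First, to get invariance of $\phi_N$ under sign or phase changes, conjugation by $\diag{\eps}$ is vacuous: diagonal matrices commute, so $\diag{\eps}\diag{x}\diag{\eps}^*=\diag{x}$ and nothing is proved. You must multiply on one side only, $\diag{\eps}\diag{x}=\diag{\eps x}$, which is legitimate precisely because unitary invariance in Definition \ref{nui def} is two-sided, $N(UAV)=N(A)$; this is a one-word fix, not a gap. Second, your closing observation is accurate: the alternative route through Thompson's Theorem \ref{modulos} is not a shortcut, since reducing to positive matrices still requires subadditivity of $\|\cdot\|_\phi$ on positives, which rests on the same majorization machinery --- invoked naively, that route is circular.
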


\subsection{Rectifiable metrics in the unitary group}\label{distancias}

By considering as a Lagrangian a unitarily invariant norm $\|\cdot\|_{\phi}$, the action $S$ can be interpreted as the length of curves $L_{\phi}$, and the rectifiable distance between $U,V\in \matu$ is
$$
d_{\phi}(U,V)=\inf\left\{ L_{\phi}(\gamma) |\, \gamma:[a,b]\to\matu\mbox{ is piecewise smooth and joins } U \mbox{ to } V \mbox{ in }\matu\right\}.
$$

The function $d_{\phi}$ is in fact a distance, since  $\|U-V\|_{\phi}\simeq d_{\phi}(U,V)$ for  any $U,V\in \matu$. One of the main features of this metric is that it is invariant for the action of the unitary group $\matu$, in fact it is a bi-invariant metric
$$
d_{\phi}(UV_1W,UV_2W)=d_{\phi}(V_1,V_2)
$$
for $U,W,V_1,V_2\in \matu$.

\subsubsection{Minimality of one-parameter subgroups}

 As a direct consequence of Theorem \ref{distancia} and Theorem \ref{esunicasalvo}, we obtain the following result, which  generalizes  \cite[Theorem 3.2]{upe} for the $p$-norms ($p\ge 2$), see also \cite{mlr}.

\begin{teo}\label{distancia nui}
Let $U,V\in \matu$ and $V=Ue^{iZ}$, with $\|Z\|\le \pi$, $Z\in \matsa$. Then, the curve $\delta(t)=Ue^{itZ}$ is shorter than any other piecewise smooth curve $\gamma$ in $\matu$ joining $U$ to $V$, when we measure them with the norm $\nf$. In particular, $d_{\phi}(U,V)=\|Z\|_{\phi}$. If $\|U-V\|<1$ (equivalently, if $\|Z\|<\pi$), then this $\delta$ is the unique short path joining $U,V$ in $\matu$ provided the norm is stricly convex.
\end{teo}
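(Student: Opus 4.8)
The plan is to recognize that a unitarily invariant norm $\nf$ is nothing but a special case of a symmetric Lagrangian, so that the whole apparatus of Section \ref{gs} applies with no modification. First I would check the four defining requirements of Proposition \ref{propiedades de L}: positivity and the normalization $\nf=0$ at $0$ are immediate, convexity follows from the triangle inequality together with homogeneity, and the two-sided invariance $\|UAV\|_\phi=\|A\|_\phi$ is precisely Definition \ref{nui def}. Thus $\ele=\nf$ is a symmetric Lagrangian. The next observation is that the action then \emph{is} the Finsler length: since $\gamma(t)$ is unitary, $\ele(\dot\gamma(t))=\|\dot\gamma(t)\|_\phi=\|\gamma(t)^*\dot\gamma(t)\|_\phi=\sub{\|\dot\gamma(t)\|}{\gamma(t)}$, whence $\ese(\gamma)=L_{\phi}(\gamma)$. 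This is the bridge that lets me transport the abstract optimality results to the metric statement.

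With this identification, minimality is a direct application of Theorem \ref{distancia} taken with $b=1$: the geodesic segment $\delta(t)=Ue^{itZ}$ is optimal among piecewise smooth curves joining $U$ to $V$, and $\inf\ese=\ele(Z)=\|Z\|_\phi$. Here one notes that $\ele(\dot\delta(t))=\|\,Ue^{itZ}(iZ)\|_\phi=\|iZ\|_\phi=\|Z\|_\phi$ is constant along $\delta$ (using left invariance and $|iZ|=|Z|$), so that $\ese(\delta)=\|Z\|_\phi$. The one point deserving a remark is that $d_{\phi}$ is defined as an infimum over curves with an \emph{arbitrary} parameter interval, whereas Theorem \ref{distancia} fixes $[0,b]$; but the length $L_{\phi}$ is invariant under reparametrization, so restricting to $[0,1]$ leaves the infimum unchanged, and I conclude $d_{\phi}(U,V)=\|Z\|_\phi$.

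For uniqueness I would invoke Remark \ref{normastrict}: for a linear-space norm, ``strict convexity'' means exactly the nondegeneracy condition, since no genuine norm can be strictly convex in the stronger Lagrangian sense. Hence a strictly convex $\nf$ is nondegenerate, and, provided $\|Z\|<\pi$, Theorem \ref{esunicasalvo} applies and forces every optimal curve to be a reparametrization of $\delta$. This is the correct reading of ``unique short path'': the uniqueness is \emph{geometric}, i.e. modulo reparametrization, which is the sharpest statement possible because the length functional is itself parametrization-free. Finally I would record the translation of hypotheses: from $U-V=U(1-e^{iZ})$ and diagonalizing $Z$ one gets $\|U-V\|=\max_j 2\,|\sin(\avi{j}{Z}/2)|$, so under $\|Z\|\le\pi$ the open condition $\|Z\|<\pi$ corresponds precisely to $U,V$ not being antipodal (the excluded case $\|Z\|=\pi$, i.e. $\|U-V\|=2$, discussed in the remark following Theorem \ref{esunicasalvo}).

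The main obstacle is conceptual rather than computational: every individual step collapses to a theorem already proved, so the real care must go into correctly matching the parametrization-invariant length problem of this section with the fixed-interval action problem of Section \ref{gs}, and into not overclaiming uniqueness—for a norm one obtains uniqueness only up to reparametrization, never as a single parametrized function.
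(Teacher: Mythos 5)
Your proposal is correct and follows exactly the route the paper takes: the paper derives Theorem \ref{distancia nui} as ``a direct consequence of Theorem \ref{distancia} and Theorem \ref{esunicasalvo}'', which is precisely your combination of the fixed-interval optimality result with the nondegeneracy-based uniqueness theorem, read through Remark \ref{normastrict}. Your additional care about reparametrization invariance of $L_\phi$ and the translation between $\|Z\|<\pi$ and the non-antipodal condition on $U,V$ only makes explicit what the paper leaves implicit.
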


\begin{rem}\label{adicione}
 A question related to the uniqueness of geodesics, is if we can ensure that the points in $\matu$ are aligned when the distance is additive. That is, if
$$
d_{\phi}(U,V)=d_{\phi}(U,W)+d_{\phi}(W,V).
$$
implies that there exists $t_0\in [0,1]$ and $X_0\in\matsa$ with $\|X_0\|\le \pi$ such that
$$
V=Ue^{iX_0},\qquad \peso{while} W=Ue^{it_0 X_0}.
$$
\end{rem}

The previous theorem implies this when $\|U-V\|<2$. However, the question always has an affirmative answer (provided the norm is strictly convex), with a simpler proof.

\begin{teo}\label{cambiadito2}
Assume that the norm $\nf$ is strictly convex, and let $U,V,W\in\matu$ be such that
$$
d_{\phi}(U,V)=d_{\phi}(U,W)+d_{\phi}(W,V).
$$
Then $U,V,W$ are aligned in $\matu$.
\end{teo}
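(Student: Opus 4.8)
The plan is to normalise by bi-invariance, rewrite the additivity of distances as a product-of-exponentials identity, and then run the inequality chain of Proposition \ref{poli} in its equality case, reading off rigidity from the strict convexity (i.e. nondegeneracy, Remark \ref{normastrict}) of $\nf$.

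First I would set $U=1$ using bi-invariance of $d_\phi$, and write $V=e^{iZ}$ with $Z\in\matsa$, $\|Z\|\le\pi$, so that $d_\phi(U,V)=\|Z\|_\phi$ by Theorem \ref{distancia nui}. Writing $W=e^{iX}$ and $W^*V=e^{iY}$ with $\|X\|,\|Y\|\le\pi$ gives $e^{iZ}=e^{iX}e^{iY}$, $d_\phi(U,W)=\|X\|_\phi$ and $d_\phi(W,V)=\|Y\|_\phi$, so the hypothesis becomes $\|Z\|_\phi=\|X\|_\phi+\|Y\|_\phi$; the cases $X=0$ or $Y=0$ give trivial alignments, so assume $X,Y\neq 0$. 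By Theorem \ref{porron} and Corollary \ref{corti} there are unitaries $P,Q$ with $e^{iZ}=e^{i S}$ and $|Z|\le|S|$, where $S=PXP^*+QYQ^*$. Unitary invariance, monotonicity (P3 of Proposition \ref{propiedades de L}) and the triangle inequality then yield
\[
\|Z\|_\phi=\bigl\|\,|Z|\,\bigr\|_\phi\le\bigl\|\,|S|\,\bigr\|_\phi=\|S\|_\phi\le\|PXP^*\|_\phi+\|QYQ^*\|_\phi=\|X\|_\phi+\|Y\|_\phi=\|Z\|_\phi,
\]
so every inequality is an equality.

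The crux is to turn these equalities into $|Z|=S$. From the triangle-inequality equality and nondegeneracy of $\nf$ I get $PXP^*=s\,QYQ^*$ for some $s\ge 0$. The key technical point is that nondegeneracy upgrades the monotonicity of the gauge $\phi$ to \emph{strict} monotonicity: if $0\le a\le b$ in $\R^n$ with $\phi(a)=\phi(b)$, then the midpoint $m=\tfrac12(a+b)$ satisfies $a\le m\le b$, hence $\phi(m)=\phi(a)=\phi(b)$; by homogeneity this is $\phi(a+b)=\phi(a)+\phi(b)$, so nondegeneracy forces $a=tb$ and then $t=1$, i.e. $a=b$. Applying this to the singular values, $|Z|\le|S|$ gives $s_k(Z)\le s_k(S)$ by Weyl monotonicity, and $\|Z\|_\phi=\|S\|_\phi$ forces $s(Z)=s(S)$; since $|Z|\le|S|$ have equal trace, $|Z|=|S|$, whence $\|S\|=\|Z\|\le\pi$.

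Finally, if $\|Z\|<\pi$ then $e^{iZ}=e^{iS}$ with both exponents of norm $<\pi$, and injectivity of the exponential there gives $Z=S=PXP^*+QYQ^*$. Combined with the proportionality this reads $PXP^*=rZ$ and $QYQ^*=(1-r)Z$ with $r=\tfrac{s}{1+s}\in[0,1]$, so $\avi{k}{X}=r\,\avi{k}{Z}$ and $\avi{k}{Y}=(1-r)\,\avi{k}{Z}$ for all $k$; Lemma \ref{lanza la bola chico} then gives $X=rZ$, i.e. $W=e^{irZ}$ lies on the geodesic $t\mapsto e^{itZ}$, which is precisely the alignment of Remark \ref{adicione}. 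The hard part will be the antipodal case $\|Z\|=\pi$: there the exponential is no longer injective and Lemma \ref{lanza la bola chico} breaks down, so $Z$ and $S$ may differ by a sign on the $\pm\pi$-eigenspace $N$ of $Z$, on which $V$ acts as $-I$. I expect to dispose of this by splitting off $N$: on $N^\perp$ the spectrum of $Z$ is bounded away from $\pi$ and the argument above applies verbatim, while on $N$ the identity $V|_N=-I$ makes both signs of the exponent admissible for a minimal geodesic, so the spectral proportionality $\avi{k}{X}=s\,\avi{k}{Y}$ should still let me choose a single exponent $X_0$ with $V=e^{iX_0}$ and $W=e^{it_0X_0}$, recovering alignment in general.
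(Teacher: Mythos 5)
Your argument for the case $\|Z\|<\pi$ is correct, and it takes a genuinely different route from the paper's: you run the Thompson machinery (Theorem \ref{porron} plus Corollary \ref{corti}) of Theorem \ref{cambiadito} in its equality case, and your observation that nondegeneracy upgrades monotonicity of the gauge function to \emph{strict} monotonicity (so that $|Z|\le|S|$ together with $\|Z\|_\phi=\|S\|_\phi$ forces $|Z|=|S|$) is a correct and useful little lemma. But note that this case is not the real content of the theorem: as the paper itself remarks right after Remark \ref{adicione}, when $\|U-V\|<2$ alignment already follows from Theorem \ref{distancia nui} (concatenate the minimal geodesics $U\to W$ and $W\to V$ and invoke uniqueness of short paths). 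The whole point of Theorem \ref{cambiadito2} is that alignment holds with \emph{no} restriction, i.e.\ including the antipodal case $\|Z\|=\pi$.

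That antipodal case is exactly where your proposal has a genuine gap, and the patch you sketch does not work, for two concrete reasons. First, Lemma \ref{lanza la bola chico} fails irreparably at norm $\pi$: its proof rests on the uniqueness of geodesics of length $<\pi$ on the sphere $S^{n-1}$, and antipodal points are joined by infinitely many geodesics of length $\pi$; so spectral proportionality cannot be promoted to matrix proportionality on the $\pm\pi$ block. Second, and more fatally, the proposed splitting over $N=\ker(V+1)$ does not apply to $X$ and $Y$ at all: only $Z$ and $S$ commute with $V$ (being logarithms of it), whereas $X$ commutes with $W$ and $Y$ with $W^*V$, and neither need commute with $V$; hence $X,Y$ are not block-diagonal with respect to $N\oplus N^\perp$, and the claim that ``on $N^\perp$ the argument applies verbatim'' is unfounded. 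The missing idea is the paper's one-line trick, which replaces all of the Thompson machinery: consider $\alpha(t)=e^{itX}e^{itY}$. Since $e^{-itX}\dot\alpha(t)\,e^{-itY}=i(X+Y)$, unitary invariance gives $L_\phi(\alpha)=\|X+Y\|_\phi$, and minimality of the geodesic (Theorem \ref{distancia nui}, valid for $\|Z\|\le\pi$) yields $\|X+Y\|_\phi\ge\|Z\|_\phi=\|X\|_\phi+\|Y\|_\phi\ge\|X+Y\|_\phi$. Equality in the triangle inequality plus nondegeneracy then gives $Y=\lambda X$ with $\lambda\ge0$ \emph{directly}, whence $V=e^{i(1+\lambda)X}$ and $W=e^{iX}$ are aligned, with $X_0=(1+\lambda)X$ and $t_0=(1+\lambda)^{-1}$. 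This works uniformly in $\|Z\|\le\pi$, requires neither Thompson's theorem nor Lemma \ref{lanza la bola chico}, and avoids any case distinction.
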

\begin{proof}
We can assume that $U=1$, $V=e^{iZ}$, $W=e^{iX}$ with $X,Z$ of norm less or equal than $\pi$. Let $Y\in\matsa$ such that $\|Y\|\le \pi$ and $e^{iZ}=e^{iX}e^{iY}$. Then the hypothesis is that
$$
\|Z\|_\phi=\|X\|_\phi+\|Y\|_\phi.
$$
Consider the smooth path $\alpha(t)=e^{itX}e^{itY}$. Then $\alpha$ joins the same endpoints that $\delta(t)=e^{itZ}$ in $\matu$, thus 
$$
\|X+Y\|_\phi=L_{\phi}(\alpha)\ge L_\phi(\delta)=\|Z\|_\phi=\|X\|_\phi+\|Y\|_\phi.
$$
Since the norm is strictly convex, there exists $\lambda\ge 0$ such that $Y=\lambda X$. Pick $X_0=(1+\lambda)X$ and $t_0=(1+\lambda)^{-1}$ to finish the proof.
\end{proof}

\subsection{The Grassmannian}\label{soyo}

The Grassmannian $\cG_n$ is the set of subspaces of $\C^n$, which can be identified with the set of orthogonal projections in $\mat$. If we consider in $\mat$ the topology defined by any of all the equivalent norms,  the Grassmann space endowed with the inherited topology  becomes a compact set. However, it is not connected. Indeed, it is enough to consider the trace $tr$, which is a continuous map defined on the whole space $\mat$, and restricted to $\cG_n$ takes only positive integer values. In particular, this shows that the connected components of $\G_n$ are the subsets $\G_{m,n}$ defined as:
$$
\G_{m,n}:=\{P\in\G_n:\ \tr(P)=m\}.
$$
Each of these components is a submanifold of $\mat$ \cite[p.129]{W}, and connected components are given by the unitary orbit of a given projection $P$ such that $tr(P)=m$:
$$
\G_{m,n}=\{UPU^*:U\in \matu\}.
$$
The tangent space at a point $P\in\G_{m,n}$ can be identified with the subspace of $P$-codiagonal Hermitian matrices, i.e.
$$
T_P\G_n=\left\{X\in\matsa:\ X=PX+XP\right\}\,.
$$
In particular note that $T_P\G_n$ has a natural complement $\sub{N}{P}$, which is the space of Hermitian matrices that commute with $P$, that is, the $P$-diagonal Hermitian matrices. The decomposition in diagonal and codiagonal matrices defines a normal bundle, and leads to a covariant derivative
\begin{align}
\nabla_V \,\Gamma(P) =\sub{\Pi}{\sub{T}{P}||\sub{N}{P}}\left.\frac{d}{dt}\Gamma(\alpha(t))\right|_{t=0}\,, \label{BebiCepita}
\end{align}
where $\Gamma$ is a vector field along the curve $\alpha:(-\eps,\eps)\to\cG_{m,n}$ that satisfies $\alpha(0)=P$ and $\dot\alpha(0)=V$. So, we have a notion of parallelism, and the geodesics in this sense are described by the following theorem:
\begin{teo}[Porta-Recht \cite{pr}]\label{geodesics}
The unique geodesic at $P$ with direction $X$ is:
$$
\gamma(t)=e^{\,itX}Pe^{\,-itX}\,.
$$
\end{teo}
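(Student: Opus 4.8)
The plan is to check directly that $\gamma(t)=e^{itX}Pe^{-itX}$ satisfies the geodesic equation $\nabla_{\dot\gamma}\dot\gamma=0$ for the connection \eqref{BebiCepita}, and then to obtain uniqueness from the general theory of affine connections. Here $X\in\matsa$ should be read as $P$-codiagonal, i.e. $X\in T_P\G_n$; this restriction is essential, as will be clear at the end. First I would note that $\gamma$ lies in the right component: since $X$ is Hermitian, $e^{itX}\in\matu$, so each $\gamma(t)$ is a self-adjoint projection with $\tr\gamma(t)=\tr P=m$, hence $\gamma(t)\in\G_{m,n}$. Differentiating and using that $X$ commutes with $e^{itX}$ gives the compact formulas $\dot\gamma=i[X,\gamma]$ and $\ddot\gamma=i[X,\dot\gamma]=-[X,[X,\gamma]]$. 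Differentiating the defining identity $\gamma^2=\gamma$ gives $\gamma\dot\gamma+\dot\gamma\gamma=\dot\gamma$, which is precisely the condition $\dot\gamma(t)\in T_{\gamma(t)}\G_n$; thus $\dot\gamma$ is a tangent field along $\gamma$ and \eqref{BebiCepita} may be applied to it.

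The computational heart of the argument is to show that $\ddot\gamma(t)$ belongs to the normal space $N_{\gamma(t)}$, equivalently that $\ddot\gamma(t)$ commutes with $\gamma(t)$. Granting this, $\Pi_{T_{\gamma(t)}\|N_{\gamma(t)}}\ddot\gamma(t)=0$, so $\nabla_{\dot\gamma}\dot\gamma=0$ and $\gamma$ is a geodesic. Conjugating by the unitary $e^{-itX}$, which fixes $X$ and carries $\gamma(t)$-diagonal matrices to $P$-diagonal ones, reduces the claim to showing that $[X,[X,P]]$ commutes with $P$. Writing $X$ in $2\times2$ block form relative to $\C^n=\ran P\oplus\ran(1-P)$, codiagonality forces the two diagonal blocks of $X$ to vanish, and a direct multiplication yields $[X,[X,P]]=\mathrm{diag}(2bb^*,-2b^*b)$, where $b$ is the off-diagonal block of $X$. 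This is $P$-diagonal, as needed.

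For uniqueness I would invoke that \eqref{BebiCepita} defines a smooth affine connection on the manifold $\G_{m,n}$, so that geodesics are governed by a second-order ODE and, by the classical existence and uniqueness theorem for ordinary differential equations, are determined by their initial position and velocity. The curve $\gamma$ has $\gamma(0)=P$ and $\dot\gamma(0)=i[X,P]$. Since $X\mapsto i[X,P]$ is a linear bijection from the codiagonal Hermitian matrices onto $T_P\G_n$ --- injective because $i[X,P]=0$ together with codiagonality forces $X=0$, and surjective by a dimension count --- every tangent direction at $P$ is attained by exactly one curve of this form, giving both existence and uniqueness.

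The step I expect to be delicate is not any single hard estimate but the bookkeeping around what ``direction $X$'' means: the genuine initial velocity of $\gamma$ is $i[X,P]$ rather than $X$, so the statement is best understood through the generator-to-velocity correspondence $X\mapsto i[X,P]$ above. The one truly computational point is checking that $\ddot\gamma$ has no tangential component, and it is exactly here that codiagonality of $X$ is indispensable: for a general Hermitian $X$ the bracket $[X,[X,P]]$ acquires nonzero off-diagonal blocks and $\gamma$ ceases to be a geodesic.
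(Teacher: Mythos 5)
Your proposal is correct, but note that the paper itself offers no proof of Theorem \ref{geodesics}: it is quoted verbatim from Porta--Recht \cite{pr}, so there is nothing internal to compare against. Your verification is essentially the classical one (and close in spirit to \cite{pr}, who work in the more general C$^*$-algebra setting): the identities $\dot\gamma=i[X,\gamma]$, $\ddot\gamma=-[X,[X,\gamma]]$ are right; the reduction by conjugation with $e^{-itX}$ is legitimate since that conjugation fixes $X$ and intertwines the $\gamma(t)$-diagonal/codiagonal splitting with the $P$-one; and the block computation checks out: with $X=\bigl(\begin{smallmatrix}0&b\\ b^*&0\end{smallmatrix}\bigr)$ one gets $[X,[X,P]]=\bigl(\begin{smallmatrix}2bb^*&0\\ 0&-2b^*b\end{smallmatrix}\bigr)$, which commutes with $P$, so $\nabla_{\dot\gamma}\dot\gamma=\sub{\Pi}{T\|N}\,\ddot\gamma=0$ by \eqref{BebiCepita}. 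The uniqueness step is also sound, provided one grants (as you implicitly do) that \eqref{BebiCepita} is a smooth affine connection; this is immediate here because the projection onto the codiagonal part has the explicit polynomial form $Y\mapsto PY+YP-2PYP$, so the geodesic equation is a smooth second-order ODE and Picard--Lindel\"of applies. Finally, you are right to flag the bookkeeping issue that the paper's phrase ``direction $X$'' conflates the generator $X$ with the initial velocity $\dot\gamma(0)=i[X,P]$; your observation that $X\mapsto i[X,P]$ is a linear bijection of $T_P\G_n$ onto itself (indeed an anti-involution, since applying it twice gives $-X$) is exactly what makes the statement correct under either reading, and it is the same identification used implicitly in \cite{pr} and in Theorem \ref{grasita}.
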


As the unitary group acts transitively in these components via $U\cdot P=UPU^*$, they are also homogeneous spaces of $\matu$. They can be distinguished from other homogeneous submanifolds of $\matu$, because the map
$$
P\mapsto \sub{S}{P}=2P-1
$$
embeds them in $\matu$, and the map $S$ is two times an isometry. The images $S_P$ are symmetries, i.e. matrices that satisfy  $\sub{S}{P}^*=\sub{S}{P}=\sub{S}{P}^{-1}$.

\subsubsection{Finsler metrics on the Grassmannian}

For a given symmetric norm, the Grassmann space carries the Finsler structure given by
$$
\|X\|_P=\|X\|_{\phi}
$$
for $X\in T_P\G_n$, and with this structure, the Grassmann component $\{UPU^*:U\in \matu\}$ is isometric (modulo a factor $2$) to the orbit of symmetries $\{US_PU^*: U\in \matu\}$.  In the particular case when $\|\cdot\|_{\phi}$ is the Frobenius norm, this connection is the Levi-Civita connection of the metric, since the $P$-diagonal matrices are the orthogonal complement of the $P$-codiagonal matrices with respect to this Riemannian metric.

\medskip

A straightworward computation shows that, if $X=XP+PX$, then $e^{iX}S_P=S_Pe^{-iX}$. This simple observation enables to use our results in the unitary group, to prove minimality of geodesics in the Grassmann manifold:
\begin{teo}\label{grasita}
If $P,Q\in \G_{m.n}$ then there exists $X\in T_P\G_n$ such that $Q=e^{iX}Pe^{-iX}$ and $\|X\|\le\frac{\pi}{2}$, unique when $\|P-Q\|<1$. The geodesic $\gamma(t)=e^{itX}Pe^{-itX}$ is shorter than any rectifiable path in $\G_n$ joining $P,Q$ and 
$$
d_{\phi}(P,Q)=\|XP-PX\|_{\phi}=\|X\|_{\phi}.
$$
If the norm is strictly convex and $\|P-Q\|<1$, the geodesic is the unique short path joining $P,Q\in \G_n$.
\end{teo}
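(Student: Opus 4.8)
The plan is to transfer the whole problem to the unitary group through the embedding $P\mapsto S_P=2P-1$, which was already noted to be twice an isometry, and then invoke Theorem \ref{distancia nui}. First I would settle the existence of the exponent $X$: setting $W:=S_PS_Q$, a product of two symmetries satisfies $S_PWS_P=S_QS_P=W^*$, so the principal logarithm obeys $S_P(\log W)S_P=\log(S_PWS_P)=\log W^*=-\log W$. Hence $X:=\tfrac{i}{2}\log W$ satisfies $S_PXS_P=-X$, i.e. $X$ is $P$-codiagonal ($X\in T_P\G_n$), while the choice of branch puts its spectrum in $[-\tfrac\pi2,\tfrac\pi2]$, so $\|X\|\le\frac\pi2$. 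One then checks $e^{-2iX}=W=S_PS_Q$, whence $e^{iX}S_Pe^{-iX}=S_Q$ and therefore $Q=e^{iX}Pe^{-iX}$. The logarithm, and so $X$, is unambiguous exactly when $-1\notin\sigma(S_PS_Q)$, which is the condition $\|P-Q\|<1$; this reproduces the Porta-Recht picture of Theorem \ref{geodesics}.

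The key computation is that the embedding carries the Grassmann geodesic to a one-parameter subgroup. Using the observation $e^{iX}S_P=S_Pe^{-iX}$ recorded just before the statement (valid because $X$ is $P$-codiagonal), for $\gamma(t)=e^{itX}Pe^{-itX}$ I obtain
$$
S_{\gamma(t)}=e^{itX}S_Pe^{-itX}=e^{2itX}S_P=S_Pe^{-2itX},
$$
which is precisely a geodesic segment $S_Pe^{itZ}$ of $\matu$ with exponent $Z=-2X$ and $\|Z\|=2\|X\|\le\pi$. For the speed I would differentiate to get $\dot\gamma(t)=ie^{itX}(XP-PX)e^{-itX}$, and then use the algebraic identity $XP-PX=XS_P$ (immediate from $X=XP+PX$), so that unitary invariance yields $\|\dot\gamma(t)\|_\phi=\|XP-PX\|_\phi=\|XS_P\|_\phi=\|X\|_\phi$. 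Thus $\gamma$ has constant speed and $L_\phi(\gamma)=\|X\|_\phi$.

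Next I would prove minimality. Any rectifiable $\Gamma$ in $\G_{m,n}$ joining $P$ to $Q$ maps to a rectifiable curve $t\mapsto S_{\Gamma(t)}=2\Gamma(t)-1$ in $\matu$ joining $S_P$ to $S_Q$, with $L_\phi(S\circ\Gamma)=2L_\phi(\Gamma)$ since $\tfrac{d}{dt}S_{\Gamma(t)}=2\dot\Gamma(t)$ and the norm scales linearly. By Theorem \ref{distancia nui} the one-parameter subgroup $t\mapsto S_Pe^{-2itX}$ is shortest in $\matu$, with $d_\phi(S_P,S_Q)=\|{-2X}\|_\phi=2\|X\|_\phi$; hence $2L_\phi(\Gamma)=L_\phi(S\circ\Gamma)\ge d_\phi(S_P,S_Q)=2L_\phi(\gamma)$, giving $L_\phi(\Gamma)\ge L_\phi(\gamma)$. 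Combined with the speed computation this yields $d_\phi(P,Q)=\|X\|_\phi=\|XP-PX\|_\phi$. For uniqueness under strict convexity and $\|P-Q\|<1$ (so $\|Z\|<\pi$), a short $\Gamma$ satisfies $L_\phi(S\circ\Gamma)=d_\phi(S_P,S_Q)$, so $S\circ\Gamma$ is a shortest path in $\matu$; by the uniqueness clause of Theorem \ref{distancia nui} it is a reparametrization of $S_Pe^{-2itX}$, and since $S$ is injective, $\Gamma$ is a reparametrization of $\gamma$.

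The main obstacle I anticipate is bookkeeping the factor $2$ coherently across the three places it appears, namely in $L_\phi(S\circ\Gamma)=2L_\phi(\Gamma)$, in $S_{\gamma(t)}=S_Pe^{-2itX}$, and in the bound $\|Z\|=2\|X\|\le\pi$, so that the hypothesis $\|Z\|\le\pi$ of Theorem \ref{distancia nui} (strict for uniqueness) matches $\|X\|\le\frac\pi2$ exactly. A secondary technical point is verifying that $-1\in\sigma(S_PS_Q)$ is equivalent to $\|P-Q\|=1$, which is what aligns the branch-of-logarithm ambiguity with the stated uniqueness threshold; I would handle this through the principal-angle decomposition of the pair of projections.
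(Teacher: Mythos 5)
Your core argument --- transferring the problem to $\matu$ through $P\mapsto S_P=2P-1$, tracking the factor $2$ so that $Z=-2X$ satisfies $\|Z\|\le\pi$, and invoking Theorem \ref{distancia nui} both for minimality and (via strict convexity) for uniqueness up to reparametrization --- is exactly the paper's proof of this theorem. Your computation of the speed is even slightly cleaner: from $X=XP+PX$ you get $XP-PX=XS_P$, so $\|XP-PX\|_\phi=\|X\|_\phi$ by unitary invariance alone, whereas the paper derives $|XP-PX|^2=|XP+PX|^2=|X|^2$ from $PXP=0$. Both are correct.

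The genuine gap is in the existence of $X$, which the paper does not reprove but quotes from Halmos \cite{halmos} or Davis--Kahan \cite{[DK]}. Your construction $X=\tfrac{i}{2}\log(S_PS_Q)$ only works when $-1\notin\sigma(S_PS_Q)$, i.e.\ when $\|P-Q\|<1$, yet the theorem asserts existence for \emph{all} $P,Q\in\G_{m,n}$. The failure at $\|P-Q\|=1$ is not the mere ``branch-of-logarithm ambiguity'' you describe: on the $-1$-eigenspace of $W=S_PS_Q$ the principal logarithm of $W^*$ equals $+i\pi$ there, the same value as $\log W$, so the identity $\log(W^*)=-\log W$ breaks down, and with it the codiagonality $S_PXS_P=-X$. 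Concretely, take $P=\mathrm{diag}(1,0)$ and $Q=\mathrm{diag}(0,1)$ in $\G_{1,2}$: then $S_PS_Q=-I$, your recipe gives $X=\tfrac{i}{2}\log(-I)=-\tfrac{\pi}{2}I$, which is $P$-diagonal rather than $P$-codiagonal, and $e^{iX}Pe^{-iX}=P\neq Q$. To close the gap you must split off the subspace $N=\ker(S_PS_Q+1)$, which reduces both $P$ and $Q$ (since $S_PWS_P=W^*$), note that on $N$ one has $Q=1-P$, and use $\tr(P)=\tr(Q)$ to conclude $\dim(PN)=\dim\bigl((1-P)N\bigr)$, which allows a $P$-codiagonal rotation of norm exactly $\pi/2$ on $N$; this dimension count is precisely the content of Halmos' lemma and cannot be obtained from the functional calculus of $S_PS_Q$ alone. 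Granting existence, the remainder of your proof (minimality, the distance formula, and uniqueness when $\|P-Q\|<1$) is correct.
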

\begin{proof}
The existence of $X$ follows from Halmos \cite{halmos} or Davis and Kahan \cite{[DK]}. Since $e^{2iX}=S_QS_P$, if $\|Q-P\|<1$ this $X$ is unique. Since
$$
S_{\gamma(t)}=2\gamma(t)-1=e^{itX}S_Pe^{-itX}=e^{2itX}S_P=S_Pe^{-2itX},
$$
and $S$ is two times an isometry, the minimality of $\gamma$ follows from Theorem \ref{distancia nui}, and the same applies to the uniqueness in the strictly convex case. Finally, $L_{\phi}(\gamma)=\|XP-PX\|_{\phi}$, and on the other hand, since $PXP=0$ then
$$
|XP-PX|^2=|XP+PX|^2=|X|^2,
$$
thus $d_{\phi}(P,Q)=L_{\phi}(\gamma)=\| |XP-PX| \|_{\phi}= \| |X| \|_{\phi}=\|X\|_{\phi}$.
\end{proof}

\begin{rem}\label{biendefi}
In the situation of the previous theorem, it is not hard to see that if $k\in\mathbb Z$, then $PX^{2k}=X^{2k}P$, $PX^{2k+1}=-PX^{2k+1}$. Then $P|X|=|X|P=|XP|$ and $(1-P)|X|=|X|(1-P)=|PX|$. Moreover 
$$
Q=P\cos^2 X + (1-P)\sin^2 X - \frac{i}{2}P\sin 2X+ \frac{i}{2}(1-P)\sin 2X,
$$
and then $|PQ|^2=PQP=P\cos^2 X$, which leads to $|PQ|=P\cos X=\cos |XP|$, and likewise $|QP|=(1-P)\cos X=\cos |PX|$. Thus if $Y\in T_p\G_n$ is any other matrix as $X$, it follows that $P\cos X=P\cos Y$ or equivalently, 
$$
\cos |XP|=|PQ|=\cos |YP|.
$$
\end{rem}

\subsection{The angular metrics}

Let $\eme$ and $\ete$ be two $m$-dimensional subspaces of $\C^n$, and let $\sub{P}{\eme}$ and $\sub{P}{\ete}$ be the orthogonal projections onto $\eme$ and $\ete$ respectively. \textit{The principal angles} between $\eme$ and $\ete$ are the angles $\theta_1(\eme,\ete),\ldots,\theta_m(\eme,\ete)\in[0,\pi/2)$ whose cosines are the $m$ greatest singular values of $\sub{P}{\eme}\sub{P}{\ete}$, see  \cite{[J]}.

In \cite{zhang1} Li, Qiu, and Zhang used the principal angles to define metrics in the components of $\G_{m,n}$. Given a symmetric norm $\|\cdot\|_\phi$, they define for $P,Q\in \cG_{m,n}$ the following distance:
$$
\rho_\phi(P,Q)=\|\arccos |PQ|\|_\phi .
$$
These distances are called \textit{angular metrics}, because if $\phi$ is the symmetric gauge function associated to $\|\cdot\|_\phi$ then
$$
\rho_\phi(P,Q)=\phi(\theta_1(\eme,\ete),\ldots,\theta_m(\eme,\ete),0,\ldots,0).
$$
where $\eme=R(P)$ and $\ete=R(Q)$. The definition of these metrics was motivated not only by pure mathematics but also by engineering applications. For example, in robust control, a linear time-invariant system can be described by a subspace valued frequency function, and the description of an uncertain system needs a suitable distance measure between subspaces. The reader is referred to \cite{zhang1}, where other motivations and applications of these metrics are described.

\medskip

A legitimate question at this point, is if these distances are related to an infinitesimal structure on the manifold $\G_n$, that is, if the angular distance among $P,Q\in\G_{m,n}$ can be computed as the infima of the lengths of the rectifiable arcs joining $P,Q$. Note that, by Remark \ref{biendefi}, if $X$ is as in Theorem \ref{grasita}, then the angular distance among $P,Q$ can be computed as
$$
\rho_\phi(P,Q)=\|\arccos |PQ|\|_\phi =\|XP\|_\phi
$$
and this computation does not depend on the particular $X$. Then, one can be tempted to endow the Grassmannian with the Finsler metric (i.e. tangent norm) given by $\|X\|_P=\|XP\|_{\phi}$ for $X\in T_P\G_n$. The problem with this definition is that it is not clear how to extended it to the whole $\mat$ in order to obtain an unitarily invariant norm there. 

\medskip

To this end, it suffices to consider the case $m\leq n/2$. Let $\phi$ be the symmetric gauge function associated to $\|\cdot\|_\phi$ (see Theorem \ref{nui gsf}), and define $\|\cdot\|_\psi$ in the following way:
\begin{equation}\label{recti}
\|A\|_\psi=\phi\big(1/2(\svi{1}{A}+\svi{2}{A},\ldots,\svi{2m-1}{A}+\svi{2m}{A},0,\ldots,0)\big)\,,
\end{equation}
where $\svi{1}{A}$,$\ldots$,$\svi{n}{A}$ denotes the singular values of $A$ counted with multiplicity and ordered in non-increasing way\footnote{The arithmetic mean can be replaced by any positive mean.}. Straightforward computations show that $\|\cdot\|_\psi$ is a symmetric norm, and also that, for any $Q \in \G_{m,n}$ and $Z\in T_Q\G_n$ it holds
$$
\|QZ\|_\phi=\|Z\|_\psi.
$$

The following theorem gives the link between the rectifiable distances and the angular metrics:
\begin{teo}[Davis-Kahan \cite{[DK]}] \label{direct rotations}
Let $P,Q\in\cG_{m,n}$, and denote $\eme=R(P)$ and $\ete=R(Q)$. Then, if  $X\in\matsa$ is $P$-codiagonal with $\|X\|\le\pi/2$ and $Q=e^{iX}Pe^{-iX}$, its spectrum counted with multiplicity is
$$
\big(\pm\theta_1(\eme,\ete),\ldots,\pm\theta_m(\eme,\ete),0\ldots,0\big).
$$
\end{teo}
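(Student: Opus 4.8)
The plan is to reduce the statement to a single block computation with respect to the orthogonal decomposition $\C^n=\eme\oplus\eme^{\perp}$, and then to match the eigenvalues of $X$ with the principal angles through the cosine identities of Remark~\ref{biendefi}. With respect to this decomposition I would write $\sub{P}{\eme}=\begin{pmatrix} I_m & 0\\ 0 & 0\end{pmatrix}$. Since $X\in T_P\G_n$ is $P$-codiagonal, so that $PXP=0=(1-P)X(1-P)$, it must have the off-diagonal form $X=\begin{pmatrix} 0 & B^{*}\\ B & 0\end{pmatrix}$ for some $(n-m)\times m$ matrix $B$. A routine computation gives $X^{2}=\begin{pmatrix} B^{*}B & 0\\ 0 & BB^{*}\end{pmatrix}$, whence the spectrum of $X$ consists of the symmetric pairs $\pm\sigma_k$, where $\sigma_1\ge\cdots\ge\sigma_m\ge 0$ are the singular values of $B$, together with $n-2m$ further zeros (the standard fact that the nonzero eigenvalues of a Hermitian matrix with vanishing diagonal blocks come in $\pm$ pairs equal to the singular values of the off-diagonal block). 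It therefore suffices to prove that $\sigma_k=\theta_k(\eme,\ete)$ for each $k$.

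To do this I would first identify the $\sigma_k$ intrinsically. Since $XP=\begin{pmatrix} 0 & 0\\ B & 0\end{pmatrix}$, we have $|XP|=\begin{pmatrix} |B| & 0\\ 0 & 0\end{pmatrix}$, so the eigenvalues of $|XP|$ on $\eme$ are exactly $\sigma_1,\ldots,\sigma_m$; moreover $\|X\|\le\pi/2$ forces $\sigma_k\in[0,\pi/2]$. Now the hypotheses $Q=e^{iX}Pe^{-iX}$ and $\|X\|\le\pi/2$ put us precisely in the setting of Theorem~\ref{grasita}, so Remark~\ref{biendefi} applies verbatim and yields $|PQ|=\sub{P}{\eme}\cos X=\cos|XP|$, understood on $R(P)=\eme$ (off $\eme$ the operator $PQ$ vanishes). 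Consequently the restriction of $|PQ|$ to $\eme$ has eigenvalues $\cos\sigma_1,\ldots,\cos\sigma_m$, while the remaining $n-m$ singular values of $\sub{P}{\eme}\sub{P}{\ete}$ are zero.

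Because $\cos\sigma_k\ge 0$, these cosines are precisely the $m$ greatest singular values of $\sub{P}{\eme}\sub{P}{\ete}$; but by the very definition of the principal angles recalled above, the $m$ greatest singular values of $\sub{P}{\eme}\sub{P}{\ete}$ are $\cos\theta_1(\eme,\ete),\ldots,\cos\theta_m(\eme,\ete)$. Comparing these two decreasingly ordered lists gives $\cos\sigma_k=\cos\theta_k$ for every $k$, and since $\cos$ is strictly decreasing, hence injective, on $[0,\pi/2]$ and both $\sigma_k$ and $\theta_k$ lie in that interval, I conclude $\sigma_k=\theta_k$. Substituting back into the spectral description of $X$ produces exactly the claimed spectrum $(\pm\theta_1,\ldots,\pm\theta_m,0,\ldots,0)$ counted with multiplicity.

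I expect the only real difficulty to be bookkeeping rather than conceptual. One must check that the $n-m$ vanishing singular values of $\sub{P}{\eme}\sub{P}{\ete}$ are indeed the smallest ones, so that the $\cos\sigma_k$ genuinely occupy the top $m$ slots, and that the $\pm$ pairing together with the trailing zeros accounts for all $n$ eigenvalues with the correct multiplicities; this is cleanest in the case $m\le n/2$, the general case following from the symmetric roles of $\eme$ and $\eme^{\perp}$ (or from the reduction already made before \eqref{recti}). A secondary subtlety is the precise reading of the identity in Remark~\ref{biendefi}: the equality $|PQ|=\cos|XP|$ should be interpreted after restriction to $R(P)$, which is exactly the subspace on which the principal angles are defined.
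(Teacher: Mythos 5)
Your proof is correct. There is, however, nothing in the paper to compare it against: the statement is quoted verbatim as a theorem of Davis and Kahan \cite{[DK]}, and the paper gives no proof of it, using it as an external ingredient to establish Theorem \ref{son iguales}. Your argument is the standard self-contained derivation: writing $X$ in off-diagonal block form with respect to $\C^n=\eme\oplus\eme^{\perp}$, identifying the nonzero spectrum of $X$ with the pairs $\pm\sigma_k$ given by the singular values of the off-diagonal block, and then matching $\cos\sigma_k$ with the top singular values of $\sub{P}{\eme}\sub{P}{\ete}$ via $PQP=P\cos^2X$; in doing so you essentially re-derive the computation the paper records in Remark \ref{biendefi} rather than relying on it, which makes the proof independent of Theorem \ref{grasita}. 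Three minor remarks. First, your sentence ``comparing these two decreasingly ordered lists gives $\cos\sigma_k=\cos\theta_k$'' contains an indexing slip: if the $\sigma_k$ are arranged decreasingly then the $\cos\sigma_k$ are arranged increasingly, so the match is $\sigma_k=\theta_{m+1-k}$ under the usual ordering conventions; since the conclusion is a statement about the spectrum as a multiset, this is immaterial. Second, when some $\sigma_k=\pi/2$ (allowed by $\|X\|\le\pi/2$) the corresponding angle falls outside the half-open interval $[0,\pi/2)$ used in the paper's definition of principal angles; this is a boundary defect of that definition, not of your argument. Third, your care with the restriction-to-$R(P)$ reading of $|PQ|=\cos|XP|$, and with the reduction to $m\le n/2$, addresses exactly the two places where a sloppier version of this argument would break, so the proof stands as written.
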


Consider the rectifiable distance $d_\psi$ associated to the norm given in (\ref{recti}), and take $P,Q,X$ as in Theorem \ref{grasita}. Then
\begin{align*}
d_\psi(P,Q)&=\|X\|_\psi=\phi\big(1/2(\svi{1}{X}+\svi{2}{X},\ldots,\svi{2m-1}{X}+\svi{2m}{X},0,\ldots,0)\big)\\
&=\phi\big(\theta_1(\eme,\ete),\ldots,\theta_m(\eme,\ete),0\ldots,0\big)\\
&=\rho_\phi(P,Q)\,,
\end{align*}
by Theorem \ref{direct rotations}, and this establishes the following (obtained by Neretin in \cite{rusohdp} with another proof):
\begin{teo}\label{son iguales}
Let $\|\cdot\|_\phi$ be a symmetric norm, and $\rho_\phi$ its corresponding angular metric in $\cG_{m,n}$. Then, there exists an induced symmetric norm $\|\cdot\|_\psi$ such that the corresponding rectifiable distance $d_\psi$ coincides with $\rho_\phi$.
\end{teo}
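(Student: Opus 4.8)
The plan is to verify that the candidate $\|\cdot\|_\psi$ of \eqref{recti} is a bona fide unitarily invariant norm, to record the identity $\|QZ\|_\phi=\|Z\|_\psi$ on tangent spaces, and then to read off the distance by feeding the minimizing geodesic of Theorem \ref{grasita} into the Davis--Kahan description of the principal angles (Theorem \ref{direct rotations}). As a preliminary normalization I would reduce to the case $m\le n/2$, already isolated before \eqref{recti}, so that the indices $2m-1,2m$ are legitimate; the complementation $P\mapsto 1-P$ identifies $\G_{m,n}$ with $\G_{n-m,n}$, carries $Q=e^{iX}Pe^{-iX}$ to $1-Q=e^{iX}(1-P)e^{-iX}$ with the same exponent $X$, and leaves the nonzero principal angles (hence $\rho_\phi$) unchanged.

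The main obstacle is checking that \eqref{recti} really defines a symmetric norm, i.e. that
$$
\psi(x)=\phi\big(\tfrac12(x^\downarrow_1+x^\downarrow_2),\ldots,\tfrac12(x^\downarrow_{2m-1}+x^\downarrow_{2m}),0,\ldots,0\big),
$$
with $x^\downarrow$ the non-increasing rearrangement of $(|x_1|,\ldots,|x_n|)$, is a symmetric gauge function. Homogeneity and rearrangement/sign invariance are built in, and positive definiteness holds because the leading averaged coordinate $\tfrac12(x^\downarrow_1+x^\downarrow_2)$ vanishes only when $x=0$. For subadditivity I would exploit the key identity
$$
\sum_{i=1}^{k}\tfrac12\big(s_{2i-1}(A)+s_{2i}(A)\big)=\tfrac12\,\|A\|_{(2k)},\qquad k=1,\ldots,m,
$$
where $\|A\|_{(j)}=\sum_{i=1}^{j}s_i(A)$ is the Ky Fan $j$-norm. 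Since each $\|\cdot\|_{(2k)}$ is subadditive and the averaged singular-value vectors are non-increasing, this shows that the averaged vector of $A+B$ is weakly submajorized by the sum of the averaged vectors of $A$ and $B$; monotonicity of the symmetric gauge function $\phi$ under weak majorization (a standard property, see \cite{bhatia}) together with the triangle inequality for $\phi$ then gives $\|A+B\|_\psi\le\|A\|_\psi+\|B\|_\psi$. Replacing the arithmetic mean by another positive mean, as in the footnote, would require an analogous majorization input in place of the Ky Fan identity.

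Next I would establish the compatibility $\|QZ\|_\phi=\|Z\|_\psi$ for $Q\in\G_{m,n}$ and $Z\in T_Q\G_n$. In the block form determined by $\C^n=R(Q)\oplus R(Q)^\perp$, a $Q$-codiagonal Hermitian matrix is off-diagonal, so its singular values occur in equal consecutive pairs $s_{2i-1}(Z)=s_{2i}(Z)$, while $QZ$ keeps exactly one singular value from each pair; hence $s_i(QZ)=\tfrac12(s_{2i-1}(Z)+s_{2i}(Z))$ and the two sides coincide. In particular $\|\cdot\|_\psi$ restricts on each tangent space to the Finsler norm $\|QZ\|_\phi$ used in the discussion preceding the theorem, so the rectifiable distance $d_\psi$ is exactly the one induced by the angular tangent norm.

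Finally I would assemble the pieces. Given $P,Q\in\G_{m,n}$, Theorem \ref{grasita} (applied with the norm $\|\cdot\|_\psi$ now known to be symmetric, via Theorem \ref{distancia nui}) produces $X\in T_P\G_n$ with $\|X\|\le\pi/2$ and $Q=e^{iX}Pe^{-iX}$ whose geodesic is $d_\psi$-minimal, so $d_\psi(P,Q)=\|X\|_\psi$. Writing $\eme=R(P)$, $\ete=R(Q)$, Theorem \ref{direct rotations} gives that the spectrum of $X$ is $(\pm\theta_1(\eme,\ete),\ldots,\pm\theta_m(\eme,\ete),0,\ldots,0)$, whence its singular values are $\theta_1,\theta_1,\ldots,\theta_m,\theta_m,0,\ldots,0$ and the averaging collapses each pair back to $\theta_i$. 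Therefore
$$
d_\psi(P,Q)=\|X\|_\psi=\phi\big(\theta_1(\eme,\ete),\ldots,\theta_m(\eme,\ete),0,\ldots,0\big)=\rho_\phi(P,Q),
$$
which is the assertion. I expect the norm verification of the second paragraph to be the only genuinely non-formal step; everything else is bookkeeping once Theorems \ref{grasita} and \ref{direct rotations} are available.
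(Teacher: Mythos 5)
Your proposal is correct and follows essentially the same route as the paper: define $\|\cdot\|_\psi$ by \eqref{recti}, check it is a symmetric norm compatible with the angular tangent norm via $\|QZ\|_\phi=\|Z\|_\psi$, and then combine Theorem \ref{grasita} with the Davis--Kahan description of the spectrum of $X$ (Theorem \ref{direct rotations}) to collapse the paired singular values back to the principal angles. The only difference is one of detail: the paper dismisses the norm verification and the tangent-space identity as ``straightforward computations,'' whereas you supply them explicitly (the Ky Fan weak-majorization argument for subadditivity and the block-matrix pairing of singular values), which is a welcome completion rather than a departure.
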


\begin{rem}
In \cite[Section 4]{zhang1}, the authors prove that when the norm $\|\cdot\|_\phi$ is strictly convex, if the distance among $P,Q,R\in\G_{m,n}$ is additive, then there exists a direct rotation from $\eme$ to $\eze$ through $\ete$, where $\eme=R(P),\ete=R(Q)$ and $\eze=R(R)$. This last assertion is equivalent to the notion of being aligned as introduced in Remark \ref{adicione}. Thus the proof of this fact follows immediatly from Theorem \ref{cambiadito2}.
\end{rem}

\appendix{
\section{Appendix: compact operators}

The results of the previous sections can be extended to the infinite dimensional setting as follows. Let $\h$ be a complex separable Hilbert space, $\bh$ the algebra of bounded  operators with the supremum norm, $\kh$ the algebra of compact operators, $\u$ the group of unitary operators. Let $\|\cdot\|_{\phi}:\bh\to \mathbb R\cup \{\infty\}$ be a {\em symmetric norm}, that is a norm such that
\begin{equation}\label{syn}
\|AXB\|_{\phi}\le \|A\|\|X\|_{\phi}\|B\|
\end{equation}
for $A,X,B\in \bh$ (both sides can equal $\infty$). In particular, it is unitarily invariant, thus it only depends on the singular values of the operator, and as in Theorem \ref{nui gsf}, there is a symmetric gauge function $\phi:\mathbb R^{\infty}\to \mathbb R_{\ge 0}$  related to this norm; the relationship is somewhat subtle so we refer the reader to Simon's book \cite{simon} for full details on these {\em symmetrically normed ideals}. 

Let ${\cal I}\subset \kh$ stand for the ideal of operators with finite norm, which will be assumed to be complete with respect to its norm, and let $\ufi=\{u\in \u: u-1\in {\cal I}\}$. This  is a Banach-Lie group, whose Banach-Lie algebra can be readily identified with the anti-Hermitian part of ${\cal I}$, that we will denote with $i{\cal I}_{h}$. A straightforward computation using the functional calculus and the fact that $\cal I$ is an ideal shows that if $\|Z\|\le \pi$ is self-adjoint and $e^{iZ}=U$, then $Z\in \cal I$.

\subsection{The special unitary groups}

The length functional on $\ufi$ is defined accordingly as $L_{\phi}(\alpha)=\int_0^1 \|\dot{\alpha}\|_{\phi}$, and the distance $d_{\phi}$ is defined as the infima of the lengths of curves in $\ufi$ joining given endpoints; in order to prove minimality of geodesic segments, we will need the following extension of Thompson's formula, its proof can be found in \cite[Theorem 3.2]{alv}:
\begin{teo}\label{ese}
Given $X, Y\in \kh_h$, there is an isometry $w\in\op$ ($w^*w=1$), and unitary operators $U$ and $V$ such that
$$
e^{i\, wXw^*}e^{i\, wYw^*}=e^{i\,  U(wXw^*)U^*+i\, V(wYw^*)V^*}\,.
$$
\end{teo}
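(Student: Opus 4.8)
The plan is to reduce the statement to the finite–dimensional Thompson theorem (Theorem \ref{porron}) by an approximation argument, using one clean identity to explain the role of the isometry. The starting observation is that, since $w^*w=1$, the exponential series collapses via $(wXw^*)^k=wX^kw^*$ for $k\ge 1$, so that
$$
e^{i\,wXw^*}=w\,e^{iX}\,w^*+(1-ww^*),
$$
and consequently, using again $w^*w=1$ together with $(1-ww^*)w=0$ and $w^*(1-ww^*)=0$,
$$
e^{i\,wXw^*}e^{i\,wYw^*}=w\,e^{iX}e^{iY}\,w^*+(1-ww^*).
$$
Thus the content of the theorem is to produce an isometry $w$ and unitaries $U,V$ so that the operator $Z:=U(wXw^*)U^*+V(wYw^*)V^*$ satisfies $e^{iZ}=w\,e^{iX}e^{iY}\,w^*+(1-ww^*)$; the isometry is there to supply enough multiplicity to carry out a limiting construction that is obstructed on $\h$ itself.

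First I would approximate $X,Y\in\kh_h$ in operator norm by finite–rank self–adjoint operators $X_n,Y_n$ supported on a common increasing sequence of finite–dimensional subspaces $\mathcal F_n\subset\h$ (for instance spectral truncations, so that $\|X_n\|\le\|X\|$ and $\|Y_n\|\le\|Y\|$). On each $\mathcal F_n$ Theorem \ref{porron} applies verbatim and yields finite–rank unitaries $U_n,V_n$, extended by the identity on $\mathcal F_n^{\perp}$, with
$$
e^{iX_n}e^{iY_n}=e^{i Z_n},\qquad Z_n:=U_nX_nU_n^*+V_nY_nV_n^*.
$$
The operators $Z_n$ are uniformly bounded, $\|Z_n\|\le\|X\|+\|Y\|$, and since the map $T\mapsto e^{iT}$ is norm–continuous and $e^{iX_n}e^{iY_n}\to e^{iX}e^{iY}$, these finite–dimensional equalities are mutually consistent in the limit.

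The main obstacle is that the unitaries $U_n,V_n$, equivalently the operators $Z_n$, need not converge: the unitary group of $\h$ is not compact in the norm topology, so one cannot merely extract a norm–convergent subsequence and pass to the limit on a fixed space. This is precisely what the isometry bypasses. I would identify $\h\cong\h\otimes\ell^2(\N)$, legitimate since $\h$ is separable and infinite–dimensional, and take $w\colon\xi\mapsto\xi\otimes e_1$, an isometry in $\op$ with $w^*w=1$ and $ww^*$ the projection onto $\h\otimes\C e_1$; then $wXw^*$ and $wYw^*$ live on the first copy. On the ampliated space there is room to assemble the sequence of finite–dimensional solutions into genuine unitaries $U,V$ acting across the infinitely many copies, so that the block on the first copy reproduces the limiting Thompson datum while the extra copies absorb the failure of convergence. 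Transporting back through $w$ then yields the exact identity $e^{iZ}=e^{i\,wXw^*}e^{i\,wYw^*}$ recorded above.

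The delicate point — and where I expect to spend the most effort — is making this ``assembly on the copies'' precise: one must arrange the $U_n,V_n$ so that the resulting $U,V$ are honestly unitary rather than merely isometric, and so that $U(wXw^*)U^*+V(wYw^*)V^*$ is compact with exactly the prescribed exponential, while controlling the interaction between consecutive blocks. Once this bookkeeping is in place, compactness of $X,Y$ guarantees that the tails are negligible and norm–continuity of the exponential closes the argument, the remaining verifications being routine.
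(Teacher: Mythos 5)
Your opening reductions are sound: the identity $e^{i\,wXw^*}=we^{iX}w^*+(1-ww^*)$ is correct, and so is the scheme of truncating $X,Y$ to finite rank, applying Theorem \ref{porron} on a common finite-dimensional subspace, extending the Thompson unitaries by the identity, and recording the uniform bound $\|Z_n\|\le\|X\|+\|Y\|$. This matches the first half of the argument the paper actually relies on (the paper does not prove Theorem \ref{ese} in the text; it refers to \cite[Theorem 3.2]{alv}). But the step you yourself flag as delicate is not bookkeeping that can be deferred: it is the entire content of the theorem, and as described it would fail. If the unitaries $U,V$ on $\mathcal H\otimes\ell^2(\N)$ are assembled blockwise from the $U_n,V_n$ (block-diagonally, or with each $U_n$ acting inside a single copy, possibly permuting copies), then, since $wXw^*$ and $wYw^*$ are supported on the first copy, the operator $U(wXw^*)U^*+V(wYw^*)V^*$ only sees one block of $U$ and one block of $V$, and the desired identity collapses to the unampliated statement $e^{iX}e^{iY}=e^{i(U_0XU_0^*+V_0YV_0^*)}$ on $\mathcal H$ --- exactly the statement that is not available and that the isometry is there to avoid. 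Any genuinely mixing assembly must produce $A\cong X\oplus 0_\infty$ and $B\cong Y\oplus 0_\infty$, not block diagonal, whose sum nevertheless has the block-diagonal exponential $(e^{iX}e^{iY})\oplus 1$; no explicit rearrangement of the non-convergent sequence $(U_n,V_n)$ gives such $A,B$, because an exact identity must be extracted from a sequence of approximate ones and, as you note, norm compactness of $\mathcal U(\mathcal H)$ is unavailable.

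The missing device is an ultraproduct, which is how \cite[Theorem 3.2]{alv} closes the argument. Fix a free ultrafilter $\omega$ on $\N$, form the Hilbert space ultrapower $\mathcal H_\omega$ with the diagonal isometry $\iota:\mathcal H\to\mathcal H_\omega$; uniformly bounded sequences of operators act on $\mathcal H_\omega$ coordinatewise, this assignment is a unital $*$-homomorphism, so $(U_n),(V_n)$ become honest unitaries $\mathbb{U},\mathbb{V}$ and the finite-dimensional identities pass to the limit (exponentials commute with the construction by uniform convergence of the series on bounded sets). The point your sketch does not engage with --- and where compactness of $X,Y$ is really used, rather than to make ``tails negligible'' --- is that for a \emph{compact} operator $X$ the coordinatewise action of $X$ on $\mathcal H_\omega$ equals $\iota X\iota^*$, i.e.\ the copy of $X$ padded by zero on $\iota(\mathcal H)^\perp$: if $(\xi_n)$ is bounded and $\lim_\omega\langle\xi_n,\eta\rangle=0$ for all $\eta\in\mathcal H$, then $\lim_\omega\|X\xi_n\|=0$. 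This is precisely what makes the limiting operators have the form $U(wXw^*)U^*$ with $w$ an isometry of infinite co-rank (for non-compact $X$ this property breaks down). One then cuts down to a separable closed subspace $\mathcal K\subset\mathcal H_\omega$ containing $\iota(\mathcal H)$ and invariant under $\mathbb{U},\mathbb{U}^*,\mathbb{V},\mathbb{V}^*$ (take the closed span of countably many words applied to a dense sequence), restricts the identity to $\mathcal K$, and identifies $\mathcal K$ unitarily with $\mathcal H$; the composite $\mathcal H\xrightarrow{\iota}\mathcal K\cong\mathcal H$ is the isometry $w$ of the statement. Your $\mathcal H\otimes\ell^2(\N)$ picture is a reasonable heuristic for this final separable reduction, but without the ultrafilter limit (or an equivalent limiting mechanism) the ``assembly across the copies'' has no construction behind it.
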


\begin{teo}
Let $U,V\in \ufi$, $Z\in {\cal I}$ such that $V=Ue^{iZ}$ and $\|Z\|\le \pi$. Then, the curve $\gamma(t)=Ue^{itZ}$ is minimal among rectifiable curves $\alpha\subset \ufi$ joining $U,V$, with respect to the distance induced by the length $L_{\phi}$, and $d_{\phi}(U,V)=\|Z\|_{\phi}$. This curve is unique if the norm is strictly convex and $\|U-V\|<2$ (equivalently, $\|Z\|<\pi$).
\end{teo}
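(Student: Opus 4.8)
The plan is to transport the finite-dimensional arguments of Theorems \ref{distancia} and \ref{esunicasalvo} to the Banach--Lie group $\ufi$, replacing Thompson's theorem by its operator version (Theorem \ref{ese}) and watching the two places where infinite dimensionality bites: the appearance of a mere isometry $w$ in place of a unitary, and the fact that the symmetric norm $\|\cdot\|_\phi$ is strictly finer than the operator norm. Throughout I use that $\|\cdot\|_\phi=\ele$ depends only on the singular values through a symmetric gauge function $\phi$ on $\mathbb R^{\infty}$ that is monotone and convex, so the properties (P1)--(P4) of Proposition \ref{propiedades de L} hold verbatim, and that for a curve in $\ufi$ one has $\dot\alpha(t)=\alpha(t)\,(\alpha^*\dot\alpha)(t)$ with $\alpha^*\dot\alpha$ taking values in the ideal $\mathcal I$.

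First I would establish the operator analogue of Proposition \ref{muchas}. For two factors, given compact self-adjoint $X,Y,Z$ with $\|Z\|\le\pi$ and $e^{iX}e^{iY}=e^{iZ}$, a direct computation with $ww^*=P$ and $w^*w=1$ gives $e^{iwXw^*}e^{iwYw^*}=e^{iwZw^*}$, so Theorem \ref{ese} produces unitaries with $e^{iwZw^*}=e^{i(UwXw^*U^*+VwYw^*V^*)}$. Since $\|wZw^*\|=\|Z\|\le\pi$, the elementary spectral fact used in Corollary \ref{corti} (if $e^{iA}=e^{iB}$ and $\|A\|\le\pi$ then $|A|\le|B|$, whose proof via spectral decomposition survives for compact operators) yields $w|Z|w^*=|wZw^*|\le|UwXw^*U^*+VwYw^*V^*|$. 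Applying the operator form of Thompson's modulus inequality (Theorem \ref{modulos}, which extends to $\kh$) to the right-hand side and then compressing by $w^*(\cdot)w$, I obtain $|Z|\le T_1|X|T_1^*+T_2|Y|T_2^*$ with $T_1,T_2$ contractions. The observation that renders the isometry harmless is that a compression by a contraction does not increase singular values, $\svi{j}{T|X|T^*}\le\svi{j}{X}$, so monotonicity of $\phi$ gives $\ele(T|X|T^*)\le\ele(X)$; this is precisely the estimate needed to run the induction of Proposition \ref{muchas} with contractions in place of unitaries. With this in hand, Proposition \ref{poli} (segments beat polygonal paths) carries over word for word, using only (P1)--(P4) and convexity.

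The main obstacle is the polygonal approximation lemma in the $\phi$-norm topology. In finite dimensions one only needs operator-norm closeness of the velocities because $\ele$ is automatically continuous there; here $\ele=\|\cdot\|_\phi$ is continuous for its own norm, so I must approximate $\dot\alpha$ in $\phi$-norm. The plan is to keep the geodesic-interpolation construction of the lemma preceding Theorem \ref{distancia}: on a fine partition set $Z_k=\log(\alpha(t_k)^*\alpha(t_{k+1}))\in\mathcal I$ (well defined in the ideal by functional calculus once $\|\alpha(t_k)^*\alpha(t_{k+1})-1\|<2$), and let $P_\epsilon$ be the corresponding broken geodesic, with constant velocity $Z_k/(t_{k+1}-t_k)$ on $[t_k,t_{k+1}]$. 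What must be upgraded is the convergence $Z_k/(t_{k+1}-t_k)\to(\alpha^*\dot\alpha)(t_k)$ from operator norm to $\phi$-norm; this would follow from the $\phi$-norm uniform continuity of $t\mapsto(\alpha^*\dot\alpha)(t)$ on the compact interval together with a $\phi$-norm Lipschitz estimate for $h\mapsto h^{-1}\log(\alpha^*(t)\alpha(t+h))$ on bounded sets, obtained by expanding $\log$ in its power series and using the ideal bound $\|AXB\|_\phi\le\|A\|\,\|X\|_\phi\,\|B\|$. Granting this, the reverse triangle inequality $|\ele(\dot P_\epsilon)-\ele(\dot\alpha)|\le\|\dot P_\epsilon-\dot\alpha\|_\phi$ and Proposition \ref{poli} give $L_\phi(\gamma)\le L_\phi(P_\epsilon)\le L_\phi(\alpha)+\epsilon$, whence minimality and $d_\phi(U,V)=\|Z\|_\phi$ (the upper bound being $L_\phi(\gamma)=\int_0^1\|Z\|_\phi\,dt=\|Z\|_\phi$); the case of merely rectifiable $\alpha$ follows exactly as in the remark after Theorem \ref{distancia}.

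Finally, for uniqueness when $\|Z\|<\pi$ and $\phi$ is strictly convex (i.e.\ nondegenerate, Remark \ref{normastrict}), I would reproduce Theorem \ref{esunicasalvo}, whose only nontrivial ingredient is Lemma \ref{lanza la bola chico}. Its proof transfers to the separable space $\h$ once one notes that a compact self-adjoint $Z$ with $\|Z\|<\pi$ attains its top singular value on a unit eigenvector $\xi$, that $t\mapsto e^{itZ}\xi$ is a great-circle arc of length $\|Z\|<\pi$ on the unit sphere of $\h$ and hence the unique minimizing geodesic between its endpoints, and that this forces $\xi$ to be a common eigenvector of $X$, $Y$ and $Z$. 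Peeling off the invariant orthogonal complement and iterating over the (countable) eigenbasis of the compact operator $Z$ yields the common diagonalization, completing uniqueness modulo reparametrization. The two points I expect to require the most care are the $\phi$-norm Lipschitz control of the logarithm in the approximation lemma and the spectral bookkeeping needed to pass the eigenvector argument of Lemma \ref{lanza la bola chico} through the countable iteration in infinite dimensions.
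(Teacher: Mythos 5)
Your overall architecture coincides with the paper's: Theorem \ref{ese} as the substitute for Thompson's theorem, a two-factor inequality that neutralizes the isometry $w$, transfer of the Section \ref{gs} machinery (polygonal comparison plus approximation of velocities), and uniqueness via the arguments of Theorem \ref{esunicasalvo} and Lemma \ref{lanza la bola chico} (including the observation that the great-circle argument and the eigenvector peeling survive for compact operators). Your insistence on carrying out the polygonal approximation in the $\phi$-norm rather than the operator norm is in fact more explicit than the paper, which at that point only says that the rest of the proof follows as in Section \ref{gs}; your power-series control of $\log$ via the ideal bound \eqref{syn} is the right way to make that step honest.

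The one genuine soft spot is your parenthetical claim that Theorem \ref{modulos} ``extends to $\kh$''. With unitaries this is not a fact you may quote for free: the whole point of this circle of ideas is that Thompson-type statements do \emph{not} pass verbatim to infinite dimensions (this is exactly why Theorem \ref{ese} involves an isometry $w$ rather than unitaries), and the known operator version of the modulus triangle inequality (Akemann--Anderson--Pedersen) replaces unitaries by isometries, the obstruction being an index phenomenon in polar decompositions which persists for compact operators. So, as written, your derivation of $|Z|\le T_1|X|T_1^*+T_2|Y|T_2^*$ rests on an unproven statement. The gap is repairable in two ways. First, your downstream use only needs $T_1,T_2$ to be \emph{contractions}, and the contraction version for compact operators does follow from the matrix case: approximate $A,B$ in norm by finite-rank operators, apply Theorem \ref{modulos} there, and pass to WOT limits of the unitaries; since singular values do not increase under contractive conjugation, monotonicity of $\phi$ then runs your induction unchanged. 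Second --- and this is what the paper actually does --- you can bypass Theorem \ref{modulos} entirely: because the Lagrangian in this theorem is a norm, from $|wZw^*|\le |U(wXw^*)U^*+V(wYw^*)V^*|$ one gets, using monotonicity of $\|\cdot\|_\phi$, its triangle inequality, unitary invariance, and the ideal property \eqref{syn} to absorb the isometry,
$$
\|Z\|_{\phi}=\|w^*(wZw^*)w\|_{\phi}\le\|wZw^*\|_{\phi}\le \|wXw^*\|_{\phi}+\|wYw^*\|_{\phi}\le\|X\|_{\phi}+\|Y\|_{\phi},
$$
which is the entire two-factor inequality needed (iterate it for polygonals). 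Your operator-level route, once repaired, buys more generality --- it would work for an arbitrary symmetric Lagrangian, not only for norms --- but for the statement at hand the norm-level shortcut is both sufficient and free of the infinite-dimensional subtlety.
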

\begin{proof}
If $Z\in \cal I$ is such that $e^{iZ}=e^{iX}e^{iY}$ and $\|Z\|\le \pi$ (where we can assume that $X,Y\in\cal I$), then  $e^{iwZw^*}=e^{iwXw^*}e^{iwYw^*}$ for some isometry $w\in\bh$ by Theorem \ref{ese}. With the same proof as Corollary  \ref{corti}, we obtain
$$
|wZw^*|\le |U(wXw^*)U^*+i\, V(wYw^*)V^*|.
$$
Due to (\ref{syn}), it follows that
$$
\|Z\|_{\phi}=\|w^*wZw^*w\|_{\phi}\le \|wZw^*\|_{\phi}\le \|X\|_{\phi}+\|Y\|_{\phi}
$$
since $w$ is an isometry thus $\|w\|=1$. Now the rest of the proof of minimality of segments follows as in Section \ref{gs}. The uniqueness when the norm is strictly convex can be proved  invoking Theorem \ref{ese}, and arguing as in the proof of Theorem \ref{esunicasalvo}.
\end{proof}

\subsection{The restricted Grassmannians}

The same considerations hold for the special Grassmannian manifold, whose components can be regarded as unitary orbits of self-adjoint projections $P\in\bh$, with the action of these special unitary groups:
$$
\G_{\phi}(P)=\{UPU^*: U\in \ufi\}. 
$$
Since $U-1\in \cal I$, then the orbit is contained in the affine space $P+\cal I$. Then tangent spaces are identified with
$$
T_P\G_{\phi}(P)=\{X\in {\cal I}_h: XP+PX=X\}.
$$
A well-known result of Halmos \cite{halmos} says that if $P,Q\in \bh$ are self-adjoint projections whose ranges have the same dimension (including the posiblity of $+\infty$), and the same holds for their kernels, then there exists a $P$-codiagonal $X$ such that $\|X\|\le \frac{\pi}{2}$ and $Q=e^{iX}Pe^{-iX}$. Since $\G_{\phi}\subset P+\cal I$, it is easy to check that $S_QS_P\in \ufi$. Then, $e^{2iX}=S_QS_P$ is also in $\ufi$, and it follows that $X\in \cal I$. 
\begin{cor}
If $P,Q\in \G_{\phi}(P)$ then there exists $X\in T_P\G_{\phi}(P)$ such that $Q=e^{iX}Pe^{-iX}$ and $\|X\|\le\frac{\pi}{2}$, unique when $\|P-Q\|<1$. The geodesic $\gamma(t)=e^{itX}Pe^{-itx}$ is shorter than any rectifiable path in $\G_{\phi}(P)$ joining $P,Q$ and  $d_{\phi}(P,Q)=\|XP-PX\|_{\phi}=\|X\|_{\phi}$. If the norm is strictly convex and $\|P-Q\|<1$, the geodesic is the unique short path joining $P,Q\in \G_{\phi}(P)$.
\end{cor}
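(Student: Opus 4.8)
The plan is to mirror the proof of Theorem \ref{grasita}, transporting the minimality statement from $\ufi$ to the orbit $\G_{\phi}(P)$ through the embedding $P\mapsto S_P=2P-1$, and reusing the minimality theorem for geodesic segments in $\ufi$ established just above (the infinite dimensional analogue of Theorem \ref{distancia nui}). The analytic heavy lifting, namely the extended Thompson formula (Theorem \ref{ese}) and the unitary minimality theorem, is already available, so the corollary is essentially a transport argument.

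First I would settle the existence and uniqueness of $X$. Since $Q$ lies in the orbit $\G_{\phi}(P)$, we have $Q=U_0PU_0^*$ for some $U_0\in\ufi$, so $R(P)$ and $R(Q)$ share the same dimension and likewise their kernels; hence Halmos \cite{halmos} provides a $P$-codiagonal self-adjoint $X$ with $\|X\|\le\frac{\pi}{2}$ and $Q=e^{iX}Pe^{-iX}$. The discussion preceding the statement already shows $e^{2iX}=S_QS_P\in\ufi$, which forces $X\in{\cal I}$ and therefore $X\in T_P\G_{\phi}(P)$. For uniqueness when $\|P-Q\|<1$, I would note that $S_QS_P-1=S_Q(S_P-S_Q)$, so that $\|S_QS_P-1\|=\|S_P-S_Q\|=2\|P-Q\|<2$; thus $-1\notin\sigma(S_QS_P)$, the principal logarithm is available, and $X=\tfrac{1}{2i}\log(S_QS_P)$ is the unique $P$-codiagonal exponent with $\|X\|<\frac{\pi}{2}$.

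Next comes minimality. Since $X$ is $P$-codiagonal, the computation recorded in Section \ref{soyo} gives $e^{iX}S_P=S_Pe^{-iX}$, whence
$$
S_{\gamma(t)}=e^{itX}S_Pe^{-itX}=e^{2itX}S_P=S_Pe^{-2itX}.
$$
This exhibits $t\mapsto S_{\gamma(t)}$ as a geodesic segment in $\ufi$ with exponent $-2X$ of spectral norm $\le\pi$, joining $S_P$ to $S_Q$. By the unitary minimality theorem just proved, this curve is shorter than any rectifiable curve joining $S_P$ to $S_Q$ in $\ufi$, and its $L_{\phi}$-length equals $\|2X\|_{\phi}$. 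Because $S$ is twice an isometry, any rectifiable $\eta$ in $\G_{\phi}(P)$ from $P$ to $Q$ satisfies $L_{\phi}(S_\eta)=2L_{\phi}(\eta)\ge 2\|X\|_{\phi}$, so $\gamma$ is minimal and $d_{\phi}(P,Q)=\|X\|_{\phi}$. Finally, since $PXP=0$ the block identity $|XP-PX|^2=|XP+PX|^2=|X|^2$ shows $\|XP-PX\|_{\phi}=\|X\|_{\phi}$, and the uniqueness of the short path when the norm is strictly convex and $\|P-Q\|<1$ transports from the corresponding uniqueness clause in $\ufi$ (where the matching hypotheses $\|S_P-S_Q\|<2$ and $\|2X\|<\pi$ hold).

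The only genuinely delicate points are the infinite dimensional bookkeeping: verifying that $X$ lands in the ideal ${\cal I}$ so that $\gamma$ is a bona fide curve in $\ufi$ and in $\G_{\phi}(P)$, and that the embedding $P\mapsto S_P$ transports lengths, minimizers and the uniqueness clause faithfully. I expect the main obstacle to be precisely this transport step, namely ensuring that minimality among \emph{all} rectifiable curves in $\ufi$ (not merely those lying in the orbit of symmetries) descends to minimality within $\G_{\phi}(P)$; this is exactly where the factor-two isometry $S$ is used, together with the fact that the minimizing curve in $\ufi$ happens to lie in the image of the Grassmannian.
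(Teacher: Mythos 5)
Your proposal is correct and follows essentially the same route the paper intends: the paper states this corollary without a separate proof, relying on the preceding discussion (Halmos for existence, $S_QS_P\in\ufi$ forcing $X\in\mathcal{I}$) together with the transport argument of Theorem \ref{grasita} through the factor-two isometry $P\mapsto S_P$, now feeding into the appendix's unitary minimality theorem instead of Theorem \ref{distancia nui}, which is exactly what you do. The only imprecision is that $S_P$ itself need not lie in $\ufi$, so $t\mapsto S_{\gamma(t)}$ and the curves $S_\eta$ live in the coset $\ufi S_P$ rather than in $\ufi$; one should right-multiply by $S_P$ (length-preserving by unitary invariance) so that the compared curves join $1$ to $S_QS_P=e^{2iX}$ inside $\ufi$ before invoking that theorem -- a one-line fix already implicit in your use of the facts that $S_QS_P\in\ufi$ and that $S$ is twice an isometry.
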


\begin{rem}
When ${\cal I}$ is the ideal of Hilbert-Schmidt operators, the special Grassmannian defined above is known as the {\em Sato Grassmannian} or the {\em restricted Grassmannian}. The proof of minimality of one-parameter groups in this Riemann-Hilbert setting was given in \cite{al} with a different technique.
\end{rem}

}

\bigskip

\noindent

\hspace*{.4cm}\begin{minipage}{8cm}
Jorge Antezana:\\
Universitat Aut\'onoma de Barcelona.\\
Departamento de Matemática,\\
Facultad de Ciencias\\
Edificio C Bellaterra (08193) \\
Barcelona, Espa\~{n}a.\\
e-mail: jaantezana@mat.uab.cat\\

\medskip

Gabriel Larotonda and Alejandro Varela:\\
Instituto de Ciencias \\
Universidad Nacional de General Sarmiento. \\
J. M. Gutiérrez 1150 \\
(B1613GSX) Los Polvorines, \\
Buenos Aires, Argentina.  \\
e-mails: glaroton@ungs.edu.ar,\\
avarela@ungs.edu.ar\\

\end{minipage}
\hspace*{.1cm}
 \begin{minipage}{6cm}
 \vspace*{-4.7cm}
 J. Antezana, G. Larotonda\\
 and A. Varela:\\
 Instituto Argentino de Matemática\\
 ``Alberto P. Calder\'on'', CONICET\\
 Saavedra 15, 3er piso\\
 (C1083ACA) Buenos Aires,\\
 Argentina.\\

 \end{minipage}


\begin{thebibliography}{XX}


\bibitem{al} E. Andruchow, G. Larotonda, {\it Hopf-Rinow theorem in the Sato Grassmannian}. J. Funct. Anal. 255 (2008), no. 7, 1692--1712.

\bibitem{alv} J. Antezana, G. Larotonda, A. Varela, {\it Thompson-type formulae}, preprint 	arXiv : 1107.0348v1 (2011).

\bibitem{andcan} E. Andruchow, {\it Short geodesics of unitaries in the $L^2$ metric}. Canad. Math. Bull. 48 (2005), no. 3, 340--354.

\bibitem{upe}  E. Andruchow, G. Larotonda, L. Recht, {\it Finsler geometry and actions of the $p$-Schatten unitary groups}, Trans. Amer. Math. Soc. 62 (2010), 319-344.

\bibitem{bhatia} R. Bhatia. Matrix analysis. Graduate Texts in Mathematics, 169. Springer-Verlag, New York, 1997.

\bibitem{[DK]} C. Davis, W. M.  Kahan, {\it The rotation of eigenvectors by a perturbation. III}, SIAM J. Numer. Anal. 7 (1970) 1--46.

\bibitem{gohberg} I. C. Gohberg, M. G. Krein.  Introduction to the theory of linear nonselfadjoint operators. Translated from the Russian by A. Feinstein. Translations of Mathematical Monographs, Vol. 18 American Mathematical Society, Providence, R.I. 1969.

\bibitem{halmos} P. R. Halmos, {\it Two subspaces}. Trans. Amer. Math. Soc. 144 (1969) 381--389.

\bibitem{[J]} C. Jordan, Essai sur la géométrie à $n$ dimensions, Bull. Soc. Math. France, 3 (1875), pp. 103--174.

\bibitem{zhang1} C.-K. Li, L. Qiu, Y. Zhang.  {\it Unitarily invariant metrics on the Grassmann space}. SIAM J. Matrix Anal. Appl.  27  (2005), no. 2, 507--531 (electronic).

\bibitem{mlr} L.E. Mata-Lorenzo, L. Recht. {\it Convexity properties of ${\rm Tr}[(a\sp *a)\sp n]$}. Linear Algebra Appl. 315 (2000), no. 1-3, 25--38.

\bibitem{rusohdp} Y. A. Neretin, {\it On Jordan angles and the triangle inequality in Grassmann manifolds}, Geom. Dedicata 86 (2001), 81--92.

\bibitem{pr} H. Porta, L. Recht, {\it Minimality of geodesics in Grassmann manifolds}, Proc. Amer. Math. Soc. 100 (1987), no. 3, 464--466.

\bibitem{simon} B. Simon. Trace ideals and their applications. Second edition. Mathematical Surveys and Monographs, 120. American Mathematical Society, Providence, RI, 2005.

\bibitem{[T1]} R. C. Thompson, {\it Convex and concave functions of singular values of matrix sums}, Pacific J. Math. 66 (1976), no. 1, 285--290.

\bibitem{[T2]} R. C. Thompson, {\it Matrix type metric inequalities}, Linear and Multilinear Algebra 5 (1977/78), no. 4, 303--319.

\bibitem{thompson} R.C. Thompson, {\it Proof of a conjectured exponential formula}. Linear and Multilinear Algebra  19  (1986), no. 2, 187--197.

\bibitem{W} F. W. Warner, Foundations of differentiable manifolds and Lie groups, Graduate Texts in Mathematics 94, Springer-Verlag, New York-Berlin, 1983.


\end{thebibliography}
\end{document}